\numberwithin{equation}{section}
\newtheorem{theorem}{Theorem}[section]
\newtheorem{lemma}[theorem]{Lemma}
\newtheorem{corollary}[theorem]{Corollary}
\theoremstyle{definition}
\newtheorem{construction}[theorem]{Construction}
\newtheorem{definition}[theorem]{Definition}
\newtheorem{remark}[theorem]{Remark}
\newtheorem{example}[theorem]{Example}
\begin{document}

\title[Regularity of toric ideals of graphs]
{Bounds on the regularity of toric ideals of graphs}
\thanks{Version: \today}

\author{Jennifer Biermann}
\address{Department of Mathematics and Statistics, \\
Mount Holyoke College\\
South Hadley, MA 01705}
\email{jbierman@mtholyoke.edu}

\author{Augustine O'Keefe}
\address{
Mathematics Department
Connecticut College
Mathematics Department
270 Mohegan Avenue 
New London, CT 06320}
\email{aokeefe@conncoll.edu}

\author{Adam Van Tuyl}
\address{Department of Mathematics \& Statistics\\
McMaster University \\
Hamilton, ON, L8S 4L8, Canada}
\email{vantuyl@math.mcmaster.ca}

\keywords{toric ideals, graphs, Castelnouvo-Mumford regularity,
complete bipartite graphs, chordal bipartite graphs}

\subjclass[2010]{13D02, 14M25, 05E40}

\begin{abstract}
Let $G$ be a finite simple graph.  We give a lower bound for the
Castelnuovo-Mumford regularity of the toric
ideal $I_G$ associated to $G$ in terms of the sizes and number
of induced complete bipartite graphs in $G$.  
When $G$ is a chordal bipartite graph,
we find an upper bound for the regularity of $I_G$ in terms of the
size of the bipartition of $G$.  
We also give a new proof
for the graded Betti numbers of the toric ideal associated
to the complete bipartite graph $K_{2,n}$.
\end{abstract}

\maketitle

\tikzstyle{vertex}=[circle, draw, inner sep=0pt, minimum size=6pt]
\newcommand{\vertex}{\node[vertex]}

\section{Introduction}

The last two decades have seen a significant
dictionary developed between the algebraic invariants in the
graded minimal free resolution of 
the edge ideal of a graph $G$
and the graph-theoretic invariants of
$G$ (e.g., see \cite{HHBook,VBook}).  Inspired by this work,
we wish to work towards a similar 
dictionary between finite
graphs and their associated 
toric ideals.

Given a finite simple  graph $G=(V,E)$ with 
vertex set $V=\{x_1,\dots,x_n\}$ and edge set $E=\{e_1,\dots,e_r\}$,
we abuse notation and define the polynomial rings 
$k[V]=k[x_1,\dots,x_n]$ and $k[E]=k[e_1,\dots,e_r]$ where $k$ is any field. 
Define a monomial map $\pi:k[E]\rightarrow k[V]$ by $e_i\mapsto x_{i_1}x_{i_2}$ where $e_i=\{x_{i_1},x_{i_2}\}
\in E$.  
The kernel of 
$\pi:k[E]\rightarrow k[V]$, denoted $I_G$, is the 
\textit{toric ideal defined by $G$}.
It is well-known that the generators of $I_G$ correspond to
closed even walks in $G$, and in particular,
$I_G$ is a homogeneous ideal generated by binomials (see \cite[Theorem 8.2.2]{VBook} or \cite{OH1}).  
This construction
is a specific instance of the more general construction of toric ideals;
in the general case, the $e_i$'s are mapped to distinct monomials in 
$K[V]$, and the toric ideal is the kernel of the corresponding map
(see \cite[Chapter IV]{P} for more details).

Because $I_G$ is a homogeneous ideal of $R = k[E]$, 
there is a {\it graded minimal free resolution} associated with $I_G$. 
That is, there exists a long exact sequence of the form 
\[0 \rightarrow \bigoplus_{j \in \mathbb{N}} R(-j)^{\beta_{p,j}(I_G)} 
 \rightarrow \bigoplus_{j \in \mathbb{N}} R(-j)^{\beta_{p-1,j}(I_G)} 
\rightarrow \cdots
 \rightarrow \bigoplus_{j \in \mathbb{N}} R(-j)^{\beta_{0,j}(I_G)} 
\rightarrow I_G \rightarrow 0\]
where $R(-j)$ is the graded $R$-module obtained by shifting the 
degrees of $R$ by $j$ and $p \leq r$.  The numbers
$\beta_{i,j}(I_G)$ are the $(i,j)$-th {\it graded Betti numbers} of $I_G$.

Ideally, one would like to describe the $\beta_{i,j}(I_G)$'s 
in terms of combinatorial data of $G$.   Some work in this
direction has been carried out in \cite{CN}.    In this paper, we
focus on the {\it Castelnuovo-Mumford regularity} (or regularity) of 
$I_G$, that is,
\[{\rm reg}(I_G) = \max\{j-i ~|~ \beta_{i,j}(I_G) \neq 0\}.\]

Our project should be
seen within the context of the much broader problem of understanding the regularity of an arbitrary toric ideal;  e.g, see \cite{BPV} for a method to compute the regularity of a toric ideal, and \cite[Theorem 4.5]{S1} for
an upper bound on the regularity of an arbitrary toric ideal.
Motivation to study the regularity of toric ideals is also partially driven by 
the Eisenbud-Goto conjecture which states that the regularity 
of these ideals should be bounded in terms of the degree and codimension
of the projective variety defined by the toric ideal (see
\cite{EG,S1}).

Our first main result is a lower bound on the regularity
of $I_G$ in terms of the presence of induced
subgraphs that are isomorphic to complete bipartite graphs.  Recall
that the {\it complete bipartite graph} $K_{m,n}$ is the graph
on the vertex set $\{x_1,\ldots,x_m,y_1,\ldots,y_n\}$ and edge
set $E = \{\{x_i,y_j\} ~|~ 1 \leq i \leq m, ~~ 1 \leq j \leq n\}$.  
We  show:

\begin{theorem}[Corollary \ref{finalcor}]\label{maintheorem1}
 Let $G$ be a finite simple graph.  Suppose that
$G$ has an induced subgraph $H$ of the form $H = K_{n_1,n_1} \sqcup
\cdots \sqcup K_{n_t,n_t}$ with each $n_i \geq 2$.  Then
\[{\rm reg}(I_G) \geq n_1+n_2 + \cdots + n_t - (t-1).\]
\end{theorem}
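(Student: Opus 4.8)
The plan is to prove the corollary in three steps that isolate the single substantive computation $\operatorname{reg}(I_{K_{n,n}})=n$. \emph{Step 1 (reduction to $G=H$).} I would first show that the regularity of toric ideals of graphs is monotone under induced subgraphs: if $H$ is an induced subgraph of a graph $\mathcal G$, then $\operatorname{reg}(I_H)\le\operatorname{reg}(I_{\mathcal G})$. The mechanism is the $k$-algebra map $\rho\colon k[E(\mathcal G)]\to k[E(H)]$ that fixes the edge variables of $H$ and sends every other edge variable to $0$. Because $H$ is \emph{induced} (so that $E(H)$ is exactly the set of edges of $\mathcal G$ contained in $V(H)$), every binomial $u-v\in I_{\mathcal G}$ either has $u$ and $v$ both equal to products of edges of $H$, in which case $u-v\in I_H$, or else satisfies $\rho(u-v)=0$; hence $\rho(I_{\mathcal G})=I_H$, and $\rho$ descends to a $k$-algebra retraction $\bar\rho\colon k[E(\mathcal G)]/I_{\mathcal G}\twoheadrightarrow k[E(H)]/I_H$ split by the natural subring inclusion. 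One then invokes the fact that the graded Betti numbers, and hence the regularity, of such a retract (realized here by a coordinate projection) can only drop. After this step we may assume $G=K_{n_1,n_1}\sqcup\cdots\sqcup K_{n_t,n_t}$.

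\emph{Step 2 (disjoint unions).} For this $G$ the vertex and edge sets are the disjoint unions of those of the components, so $k[E(G)]=\bigotimes_{i=1}^t k[E(K_{n_i,n_i})]$, the defining monomial map is the tensor product of the maps of the components, and hence $k[E(G)]/I_G\cong\bigotimes_{i=1}^t k[E(K_{n_i,n_i})]/I_{K_{n_i,n_i}}$ as graded $k$-algebras. By the K\"unneth formula, $\operatorname{Tor}$ over the tensor product polynomial ring is the tensor product of the $\operatorname{Tor}$'s of the factors, so the regularities of the quotient rings add: $\operatorname{reg}(k[E(G)]/I_G)=\sum_{i=1}^t\operatorname{reg}(k[E(K_{n_i,n_i})]/I_{K_{n_i,n_i}})$. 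Using $\operatorname{reg}(I)=\operatorname{reg}(R/I)+1$, valid for any nonzero homogeneous ideal $I$ of a polynomial ring $R$, this rearranges to $\operatorname{reg}(I_G)=\sum_{i=1}^t\operatorname{reg}(I_{K_{n_i,n_i}})-(t-1)$; the term $-(t-1)$ is forced by the passage between ideals and their quotient rings.

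\emph{Step 3 (the complete bipartite case).} It remains to show $\operatorname{reg}(I_{K_{n,n}})=n$. Since the monomial map sends the edge $\{x_a,y_b\}$ to $x_ay_b$, the ring $k[E(K_{n,n})]/I_{K_{n,n}}$ is the Segre product $k[x_1,\dots,x_n]\#k[y_1,\dots,y_n]$, i.e.\ the homogeneous coordinate ring of $\mathbb P^{n-1} \times \mathbb P^{n-1}$ in its Segre embedding, and $I_{K_{n,n}}$ is the ideal of $2 \times 2$ minors of a generic $n \times n$ matrix. This algebra is Cohen-Macaulay of dimension $2n-1$ with Hilbert series $\sum_{d\ge 0}\binom{n-1+d}{d}^2 t^d=h(t)/(1-t)^{2n-1}$, and one computes $\deg h(t)=n-1$; since the regularity of a Cohen-Macaulay standard graded algebra equals the degree of its $h$-polynomial, $\operatorname{reg}(k[E(K_{n,n})]/I_{K_{n,n}})=n-1$, that is $\operatorname{reg}(I_{K_{n,n}})=n$. (Alternatively one may quote the classically known regularity of determinantal, equivalently Segre, ideals.) Substituting $\operatorname{reg}(I_{K_{n_i,n_i}})=n_i$ into Step 2 and then applying Step 1 yields $\operatorname{reg}(I_G)\ge\operatorname{reg}(I_H)=\sum_{i=1}^t n_i-(t-1)$.

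The step I expect to be the main obstacle is the induced-subgraph monotonicity in Step 1. Its section sends the edge variables outside $H$ to $0$, whereas those variables act nontrivially on $k[E(\mathcal G)]/I_{\mathcal G}$, so the short exact sequence $0\to\ker\bar\rho\to k[E(\mathcal G)]/I_{\mathcal G}\xrightarrow{\bar\rho}k[E(H)]/I_H\to 0$ does \emph{not} split over $k[E(\mathcal G)]$, and one cannot read off the Betti-number inequality in a single line. Making it precise calls either for the theory of combinatorial pure subalgebras of affine semigroup rings or for a careful analysis of the change-of-rings spectral sequences attached to killing the outside edge variables. By contrast, the base case $\operatorname{reg}(I_{K_{n,n}})=n$ is routine, and is best obtained by citing known results on determinantal ideals rather than by writing down an explicit minimal free resolution.
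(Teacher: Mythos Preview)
Your argument is correct but follows a genuinely different route from the paper's. You factor the bound into (1) monotonicity of $\operatorname{reg}$ under passage to induced subgraphs, (2) additivity of $\operatorname{reg}$ of the quotient ring under disjoint union via tensor products, and (3) the exact value $\operatorname{reg}(I_{K_{n,n}})=n$ read off from the Segre/determinantal description. The paper instead builds everything on the fibre complexes $\Gamma(\alpha)$ and Theorem~\ref{toricbetti}: your Steps~1 and~2 are merged into a single statement (Theorem~\ref{inducedregularity}), whose proof exhibits directly a nonzero multigraded Betti number of $I_G$ itself---never of $I_H$---via the join decomposition $\Gamma(\alpha_1\cdots\alpha_t)=\Gamma(\alpha_1)\star\cdots\star\Gamma(\alpha_t)$ and K\"unneth, thereby sidestepping entirely the change-of-rings difficulty you correctly flag in Step~1. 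For the base case the paper deliberately avoids citing determinantal regularity and instead proves $\operatorname{reg}(I_{K_{n,n}})\geq n$ by hand: it computes the Stanley--Reisner ideal of $\Gamma\big((x_1\cdots x_ny_1\cdots y_n)^{n-1}\big)$, shells a subcomplex of the Taylor simplex to produce a nonvanishing Betti number, and then applies Hochster's formula (Theorems~\ref{srideal}--\ref{shelling} and Corollary~\ref{NonzeroBetti}). Your approach is shorter and yields the sharp value $\operatorname{reg}(I_{K_{n,n}})=n$ in one stroke, at the price of importing the Ohsugi--Herzog--Hibi combinatorial-pure-subring theorem (or an equivalent spectral-sequence argument) for Step~1 and the classical determinantal results for Step~3; the paper's approach is self-contained and develops the $\Gamma(\alpha)$ calculus, which is reusable for induced pieces other than complete bipartite graphs.
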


\noindent
It is interesting to note that this
result has a similar flavour to a result about edge ideals
in that the presence of certain induced
subgraphs gives information about the regularity of the ideal (see Remark \ref{edgeideal}).
To prove Theorem \ref{maintheorem1} we use work of 
Aramova and Herzog \cite{AH}
that relates the multigraded Betti number $\beta_{i,\alpha}(I_G)$ to
the $i$-th reduced simplicial homology of a simplicial complex $\Gamma(\alpha)$
associated to a fibre (see next section for complete details).

Our second main result is an upper bound for the regularity of toric ideals
of chordal bipartite graphs, that is, bipartite graphs which have no
induced cycles of length six or more.  Using a result
of Ohsugi and Hibi \cite{OH2} that the toric ideal of $I_G$ for this family
has a quadratic Gr\"obner basis, we are able to associate
with $G$ a new graph $H$ which has the property that $I(H)$, the edge ideal
of $H$, satisfies $\beta_{i,j}(I_G) \leq \beta_{i,j}(I(H))$ for all $i,j \geq 0$.
By applying a result of Woodroofe \cite{W} on the regularity of edge ideals, we derive
an upper bound for the regularity of $I(H)$, and consequently, $I_G$.

\begin{theorem}[Theorem \ref{secondtheorem}]\label{maintheorem2}
Let $G$ be a chordal bipartite graph with bipartition $V = \{x_1,\ldots,
x_n\} \cup \{y_1,\ldots,y_m\}$.  Let 
$r = |\{x_i ~|~ \deg x_i = 1\}|$ and $s = |\{y_j ~|~ \deg y_j = 1\}|$.  
Then
\[{\rm reg}(I_G) \leq \min\{n-r,m-s\}.\]
\end{theorem}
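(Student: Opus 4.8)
The plan is to carry out exactly the strategy announced in the introduction: pass from the toric ideal $I_G$ to the edge ideal $I(H)$ of an auxiliary graph $H$, compare their Betti numbers, and then bound $\operatorname{reg}(I(H))$ combinatorially. First I would invoke the result of Ohsugi and Hibi that a chordal bipartite $G$ has a quadratic Gröbner basis for $I_G$ (say with respect to some term order $<$). Passing to the initial ideal $\operatorname{in}_<(I_G)$ is flat, so $\beta_{i,j}(I_G) \le \beta_{i,j}(\operatorname{in}_<(I_G))$ for all $i,j$, and in particular $\operatorname{reg}(I_G) \le \operatorname{reg}(\operatorname{in}_<(I_G))$. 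Since the Gröbner basis is quadratic, $\operatorname{in}_<(I_G)$ is a quadratic squarefree monomial ideal (squarefreeness needs a small check — the leading term of a binomial coming from an even closed walk in a bipartite graph is squarefree), hence $\operatorname{in}_<(I_G) = I(H)$ for a graph $H$ whose vertices are the edges $e_1,\dots,e_r$ of $G$ and whose edges record the quadratic generators. This gives $\operatorname{reg}(I_G) \le \operatorname{reg}(I(H))$.

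Next I would apply Woodroofe's bound: for any graph $H$, $\operatorname{reg}(I(H)) \le \operatorname{coind}(\operatorname{Ind}(H)) + 1$, or in the more usable packaging, $\operatorname{reg}(I(H)) - 1$ is at most the size of a largest induced matching of $H$ — actually Woodroofe's theorem states $\operatorname{reg}(I(H)) \le \operatorname{im}(H)^c$-type bounds; the clean form I want is that $\operatorname{reg}(R/I(H))$ is bounded by the number of vertices in a minimal vertex cover, or equivalently by any cochordal cover number. The cleanest route is: chordal bipartite graphs $G$ have the property (again via the quadratic Gröbner basis structure) that $H$ is co-chordal, and Woodroofe showed co-chordal graphs have $\operatorname{reg}(I(H)) = 2$... no — rather, I would use that $\operatorname{reg}(I(H)) \le c(H) + 1$ where $c(H)$ is the co-chordal cover number, and bound $c(H)$ by $\min\{n-r, m-s\}$. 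Concretely, the quadratic generators of $\operatorname{in}_<(I_G)$ correspond to ``crossing'' pairs of edges sharing a structure governed by the bipartition; grouping these crossings according to which $x_i$ (respectively which $y_j$) is involved gives a cover of $E(H)$ by $n - r$ co-chordal subgraphs (the degree-one $x_i$'s contribute no crossings and can be discarded), and symmetrically by $m-s$ such subgraphs. Taking the better of the two covers yields $\operatorname{reg}(I(H)) \le \min\{n-r, m-s\} - 1 + 1$... I will need to track the off-by-one carefully so the final bound reads $\operatorname{reg}(I_G) \le \min\{n-r,m-s\}$, noting that $\operatorname{reg}$ here is that of the ideal $I_G$, not of $R/I_G$.

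The main obstacle, and the step requiring the most care, is the combinatorial analysis of the auxiliary graph $H$: I must understand precisely which pairs of edges of $G$ become adjacent in $H$ (equivalently, which quadratic binomials appear in the reduced Gröbner basis) and show that this graph admits a co-chordal cover of the claimed size. This is where chordal bipartiteness is essential — a general bipartite graph need not have a quadratic Gröbner basis, and even when it does the structure of $H$ could be wilder. I expect to need the characterization of chordal bipartite graphs via ``perfect elimination orderings'' (every induced subgraph has an edge whose endpoints' neighborhoods are comparable, or a similar bipartite analogue) to recursively build the co-chordal cover, peeling off one vertex of the bipartition at a time and verifying that the crossings introduced at each step form a co-chordal (equivalently, complement-chordal) graph on the edge set. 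Handling the degree-one vertices — showing they genuinely contribute nothing, so the cover has size $n-r$ rather than $n$ — is a routine but necessary bookkeeping point. Once the co-chordal cover of size $\min\{n-r,m-s\}$ is in hand, Woodroofe's theorem and the Betti-number comparison close the argument immediately.
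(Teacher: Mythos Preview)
Your high-level strategy is exactly the paper's: remove degree-one vertices (so assume $r=s=0$ and $n\le m$), pass to ${\rm in}_<(I_G)=I(H)$ for the Ohsugi--Hibi order, and bound ${\rm reg}(I(H))$ via Woodroofe's inequality ${\rm reg}(I(H))\le {\rm cochord}(H)+1$ (this is the only version of Woodroofe you need; the detour through induced matchings and vertex covers is noise). The paper's cover pieces $H_1,\dots,H_{n-1}$ are obtained by letting $H_i$ contain those edges of $H$ whose \emph{upper-right} endpoint lies in row $i$ of the bipartite adjacency matrix $A_G$; since row $n$ is never an upper-right endpoint there are only $n-1$ pieces, which settles your off-by-one worry and gives ${\rm reg}(I(H))\le (n-1)+1=n$. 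Your phrase ``grouping by which $x_i$ is involved'' is ambiguous, since every edge of $H$ involves two rows; the assignment is to the smaller row index.

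The genuine gap is the step you flag as the main obstacle: proving each $H_i$ is co-chordal. Your proposed route via a bipartite perfect-elimination ordering is not what works here and you give no indication of how it would close. The actual mechanism is Lemma~\ref{submatrixlemma}: after the Ohsugi--Hibi relabeling, $A_G$ has no $2\times 2$ submatrix $\left[\begin{smallmatrix}1&1\\1&0\end{smallmatrix}\right]$. Each $H_i$ is bipartite with parts $V_1=\{e_{i,j}\}_j$ and $V_2=\{e_{k,\ell}:k>i\}$, so in $H_i^c$ both parts are cliques and any induced cycle of length $\ge 5$ already has a chord. For a putative induced $4$-cycle in $H_i^c$ on $e_{i,a},e_{i,b}\in V_1$ (say $a<b$) and $e_{r,s},e_{j,k}\in V_2$, one of the non-edges of $H_i^c$, say $\{e_{i,a},e_{r,s}\}\in E(H_i)$, forces an all-ones $2\times 2$ submatrix of $A_G$ at rows $i,r$ and columns $s<a$; the forbidden-submatrix lemma then forces $(A_G)_{r,b}=1$ as well, whence $\{e_{i,b},e_{r,s}\}\in E(H_i)$, contradicting the assumed $4$-cycle in $H_i^c$. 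This short matrix argument, not an elimination ordering, is the heart of the proof, and without it your plan does not go through.
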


In the case that $G = K_{m,n}$, our upper and lower bounds agree,
thus giving ${\rm reg}(I_{K_{m,n}}) = \min\{m,n\}$ (and recovering a special
case of a result of Corso and Nagel \cite{CN}).

In the last section, we use the techniques of the previous
sections to give a new combinatorial proof for the graded 
Betti numbers of the toric ideal of the complete graph $K_{2,n}$. 
Previous proofs used the Eagon-Northcott resolution, which we now avoid.

As a closing comment, $G$ can also be associated to a binomial
ideal via the {\it binomial edge ideal}, that is,  
the ideal 
generated by all binomials of the form $x_iy_j - x_jy_i$
in the polynomial ring $k[x_1,\ldots,x_n,y_1,\ldots,y_n]$ whenever
$\{i,j\}$ is an edge of $G$. 
This
ideal was independently  
introduced in \cite{Hetal,O}.  For this family of binomial ideals, 
the programme to link the combinatorial data of $G$ with the graded 
minimal free resolution is much further advanced;  e.g., \cite{EZ,MM} study the regularity of these
ideals.  However, the toric ideals of this paper
are rarely binomial edge ideals, so our work complements this research.

\noindent
{\bf Acknowledgements.} 
{\em Macaulay2} \cite{Mt} and {\tt CoCoA} \cite{C} 
were used for computer experiments.  We thank Lakehead University, Mount Holyoke College, and 
McMaster University for their hospitality.
The third author acknowledges the support of an NSERC Discovery Grant.  
We would also like to thank Russ Woodroofe for answering some of our
questions.


\section{Preliminaries}

We review the relevant background needed for the remainder of the paper.

A {\it simplicial complex} $\Delta$ on a set $V = \{x_1,\ldots,x_n\}$
is a set consisting of subsets of $V$ such that $\{x_i\} \in \Delta$
for all $i = 1,\ldots,n$, and if $F \in \Delta$ and $G \subseteq F$,
then $G \in \Delta$.  The {\it facets} of $\Delta$ are the maximal
elements of $\Delta$ with respect to inclusion.   We say that a simplicial complex $\Delta$ is generated by a list of faces $\sigma_1, \dots, \sigma_r$ if every face of $\Delta$ is contained in some $\sigma_i$.  In this case we write $\Delta = \langle \sigma_1, \dots, \sigma _r\rangle$.  If $\Delta_1$ and $\Delta_2$ are simplicial complexes on vertex
sets $V_1$ and $V_2$, respectively, then
the {\it join} of $\Delta_1$ and $\Delta_2$, denoted
$\Delta_1 \star \Delta_2$ is the simplicial complex on $V_1 \cup V_2$
where $\Delta_1 \star \Delta_2 = \{ F_1 \cup F_2 ~|~ F_i \in \Delta_i\}.$

The ring $k[V] = k[x_1,\ldots,x_n]$ has a natural $\mathbb{N}^n$-grading
by setting the degree of $x_i$ to be the $i$-th standard basis vector of $\mathbb{N}^n$.
The monomial map 
$\pi: k[E] \rightarrow k[V]$ defined by $e_i \mapsto x_{i_1}x_{i_2}$ where $e_i = \{x_{i_1},x_{i_2}\}$
then induces an $\mathbb{N}^n$-grading on $k[E]$.  
In particular, for any $\alpha = (a_1,\ldots,a_n) \in \mathbb{N}^n$, 
$k[E]_{\alpha} = \pi^{-1}(k[V]_{\alpha})$.  In other words, the 
degree $\alpha$ component consists of those elements of $k[E]$
that map to $k[V]_{\alpha}$.  Note that $k[E]_{\alpha} = (0)$ if
$a_1+\cdots+a_n$ is odd because each $e_i$ is mapped to a monomial of degree
two.  Going forward, we write $\alpha$ for
both $x^{\alpha} = x_1^{a_1}\cdots x_n^{a_n}$ in $k[x_1,\ldots,x_n]$ and 
its $\mathbb{N}^n$-degree.
The 
{\it support} of a monomial $\alpha$ is the 
set 
\[{\rm supp}(\alpha) = \{x_i ~|~ \mbox{$x_i$ divides $\alpha$}\}.\]

For any monomial $\alpha \in k[x_1,\ldots,x_n]$, the {\it fibre} of $\alpha$
is the set of monomials $C_{\alpha} = \{m \in k[E]~|~ \pi(m) = \alpha\}$.
From each fibre, we can construct a simplicial complex:

\begin{definition}\label{Gamma}  Let $\alpha$ be a monomial in $k[x_1, \dots, x_n]$.  
Define $\Gamma(\alpha)$ to be the simplicial complex on the vertex set 
$\{e_1, \dots, e_r\}$  generated by the following faces:
\[
\Gamma(\alpha) = \langle {\rm supp}(w) = \{e_{i_1},\ldots, e_{i_t}\} ~\mid~ w = e_{i_1}^{b_1}\cdots e_{i_t}^{b_t} \in C_\alpha~ \rangle.
\]
\end{definition}

The ideal $I_G$ is a homogeneous
ideal in $k[E]$ with respect to the $\mathbb{N}^n$-grading described above.
As a consequence, $I_G$ has an $\mathbb{N}^n$-graded minimal free resolution.  
The multigraded Betti numbers of $I_G$ are related to the simplicial
complexes $\Gamma(\alpha)$ via the following theorem.

\begin{theorem}[\cite{AH}]\label{toricbetti}
For any monomial $\alpha \in k[x_1,\ldots,x_n]$, and any $i\geq 0$
\[
\beta_{i, \alpha}(I_G) = \dim_k (\widetilde{H}_i(\Gamma(\alpha);k)) ,
\]
where $\widetilde{H}_i(-;k)$ denotes the reduced simplicial
homology with respect to the field $k$.
\end{theorem}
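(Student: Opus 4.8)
The plan is to reduce the multigraded Betti numbers of $I_G$ to Koszul homology over the polynomial ring $k[E]$, and then to recognize the relevant multigraded strands of that Koszul homology as the reduced simplicial chain complexes of the complexes $\Gamma(\alpha)$ (so that $\Gamma(\alpha)$ is playing the role of the ``squarefree divisor complex'' of the underlying affine semigroup). This puts the statement in the circle of known formulas for the resolutions of affine semigroup rings, and the work is in setting up the dictionary.

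First I would replace $I_G$ by the quotient ring $A := k[E]/I_G$. Applying $\operatorname{Tor}_\bullet^{k[E]}(-,k)$ to the $\mathbb{N}^n$-graded short exact sequence $0 \to I_G \to k[E] \to A \to 0$ and using that $\operatorname{Tor}_i^{k[E]}(k[E],k) = 0$ for $i \ge 1$, while the map $\operatorname{Tor}_0^{k[E]}(k[E],k) \to \operatorname{Tor}_0^{k[E]}(A,k)$ is an isomorphism (both sides being $k$ concentrated in multidegree $0$, since $I_G$ is generated in nonzero degrees), yields isomorphisms of multigraded vector spaces $\operatorname{Tor}_i^{k[E]}(I_G,k)_\alpha \cong \operatorname{Tor}_{i+1}^{k[E]}(A,k)_\alpha$ for all $i \ge 0$. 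So it suffices to show $\beta_{i+1,\alpha}(A) = \dim_k \widetilde{H}_i(\Gamma(\alpha);k)$.

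Next I would identify $A$ with the affine semigroup ring $k[S] \subseteq k[V]$ generated by the monomials $\pi(e_1),\dots,\pi(e_r)$, where $S \subseteq \mathbb{N}^n$ is the semigroup generated by $\deg e_1,\dots,\deg e_r$; concretely, $A_\alpha$ is spanned by the single monomial $x^\alpha$ when $\alpha \in S$ and is $0$ otherwise. Computing $\operatorname{Tor}_\bullet^{k[E]}(A,k)$ from the Koszul complex $K_\bullet$ on $e_1,\dots,e_r$ (a free resolution of $k$ over $k[E]$), the module $K_p$ has basis $\{e_T : T \subseteq \{1,\dots,r\},\ |T| = p\}$ with $e_T$ in multidegree $\sum_{i \in T}\deg e_i$, so $(A \otimes_{k[E]} K_p)_\alpha$ has $k$-basis the set $\{\,T : |T| = p,\ \alpha - \sum_{i \in T}\deg e_i \in S\,\}$. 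The crux is the combinatorial translation: $\alpha - \sum_{i \in T}\deg e_i \in S$ if and only if, viewing $T$ as a set of edges, $T$ is a face of $\Gamma(\alpha)$. Indeed, given a monomial $w' \in k[E]$ with $\pi(w') = \alpha - \sum_{i \in T}\deg e_i$, the monomial $w := w'\prod_{i \in T}e_i$ lies in the fibre $C_\alpha$ and has $T \subseteq \operatorname{supp}(w)$; conversely, if $T \subseteq \operatorname{supp}(w)$ with $w \in C_\alpha$, then $w/\prod_{i \in T}e_i$ is again a monomial of $k[E]$ witnessing $\alpha - \sum_{i\in T}\deg e_i \in S$. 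Hence $(A \otimes_{k[E]} K_\bullet)_\alpha$ is, term by term, the augmented simplicial chain complex of $\Gamma(\alpha)$ with $p$-subsets playing the role of $(p-1)$-faces, and a short check identifies the Koszul differential (up to the usual signs) with the simplicial boundary. Therefore $\operatorname{Tor}_i^{k[E]}(A,k)_\alpha \cong \widetilde{H}_{i-1}(\Gamma(\alpha);k)$, and combined with the first step this gives $\beta_{i,\alpha}(I_G) = \dim_k \operatorname{Tor}_{i+1}^{k[E]}(A,k)_\alpha = \dim_k \widetilde{H}_i(\Gamma(\alpha);k)$.

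The parts I expect to be purely routine are the sign bookkeeping matching the Koszul and simplicial differentials and the degenerate cases ($\alpha = 0$, or $\alpha \notin S$, where $\Gamma(\alpha)$ is the irrelevant or the void complex and all reduced homology vanishes along with the Betti numbers). The conceptual point, and the only place where some care is genuinely needed, is the passage between $k[E]/I_G$ and the semigroup ring $k[S]$: one must be precise that each multigraded component of $A$ is at most one-dimensional, since that is exactly the feature special to semigroup rings that collapses $(A \otimes_{k[E]} K_\bullet)_\alpha$ onto the small combinatorial object $\widetilde{C}_\bullet(\Gamma(\alpha))$ rather than something larger.
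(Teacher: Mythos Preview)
The paper does not give its own proof of this theorem; it simply cites it from Aramova--Herzog \cite{AH} and uses it as a black box. Your proposal is a correct and essentially complete proof sketch: reducing to $\operatorname{Tor}^{k[E]}_{i+1}(A,k)_\alpha$ for the semigroup ring $A=k[E]/I_G\cong k[S]$, computing that Tor via the Koszul complex, and identifying the degree-$\alpha$ strand with the augmented chain complex of the squarefree divisor complex $\Gamma(\alpha)$ is exactly the standard route to this formula (as found, for example, in the work of Aramova--Herzog or in Miller--Sturmfels). The key point you correctly isolate---that each nonzero graded piece $A_\alpha$ is one-dimensional, so that the Koszul strand collapses to simplicial chains on $\Gamma(\alpha)$---is precisely what makes the argument work, and your handling of the index shift from $I_G$ to $A$ is accurate.
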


In the statement below, if $G = (V,E)$ is a finite simple graph and 
$W \subseteq V$, then the {\it induced graph on $W$}, denoted $G_W$, is the graph
with vertex set $W$ and edge  set $\{ e \in E ~|~ e \subseteq W\}$.

\begin{theorem}\label{join}
Let $G$ be a finite simple graph on the vertex set $V = \{x_1,\ldots,x_n\}$.
Let $\alpha$ be a monomial in $k[x_1,\ldots,x_n]$, and suppose  that 
$\alpha = \alpha_1\alpha_2$ with ${\rm gcd}(\alpha_1,\alpha_2) = 1$.  If $G_{{\rm supp}(\alpha)} = G_{{\rm supp}(\alpha_1)} \sqcup
G_{{\rm supp(\alpha_2)}}$, then
\[\Gamma(\alpha) = \Gamma(\alpha_1)\star \Gamma(\alpha_2).\]
\end{theorem}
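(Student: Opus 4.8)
The plan is to prove equality of the two simplicial complexes by showing each is contained in the other, reasoning at the level of the generating faces (the supports of monomials in the relevant fibres). The key structural observation is that because $G_{\operatorname{supp}(\alpha)}$ is a \emph{disjoint union} $G_{\operatorname{supp}(\alpha_1)} \sqcup G_{\operatorname{supp}(\alpha_2)}$, every edge $e \in E$ whose corresponding monomial appears in a factorization of something in $C_\alpha$ must have both endpoints in $\operatorname{supp}(\alpha_1)$ or both in $\operatorname{supp}(\alpha_2)$ — there are no edges straddling the two vertex sets. Consequently each generator $e_i$ of $k[E]$ that can occur in the fibre $C_\alpha$ is ``typed'' as belonging either to the $\alpha_1$-side or the $\alpha_2$-side, and the vertex set of $\Gamma(\alpha)$ partitions accordingly.

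First I would take a monomial $w = e_{i_1}^{b_1}\cdots e_{i_t}^{b_t} \in C_\alpha$, so $\pi(w) = \alpha = \alpha_1\alpha_2$. Since $\gcd(\alpha_1,\alpha_2)=1$ and each $e_{i_j}$ maps under $\pi$ to a monomial supported on two vertices lying entirely on one side, I can split the product: let $w_1$ be the product of those $e_{i_j}^{b_j}$ with $e_{i_j}$ on the $\alpha_1$-side, and $w_2$ the product of the rest. Then $\pi(w_1)$ is supported inside $\operatorname{supp}(\alpha_1)$ and $\pi(w_2)$ inside $\operatorname{supp}(\alpha_2)$; since their product is $\alpha_1\alpha_2$ and the supports are disjoint, we must have $\pi(w_1)=\alpha_1$ and $\pi(w_2)=\alpha_2$, i.e. $w_1 \in C_{\alpha_1}$ and $w_2 \in C_{\alpha_2}$. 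Then $\operatorname{supp}(w) = \operatorname{supp}(w_1) \cup \operatorname{supp}(w_2)$ is a face of $\Gamma(\alpha_1)\star\Gamma(\alpha_2)$. This gives $\Gamma(\alpha) \subseteq \Gamma(\alpha_1)\star\Gamma(\alpha_2)$ on generators, hence on all faces.

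For the reverse inclusion, I would start with a generating face $F_1 \cup F_2$ of the join, where $F_j = \operatorname{supp}(u_j)$ for some $u_j \in C_{\alpha_j}$. Then $u_1 u_2 \in k[E]$ satisfies $\pi(u_1u_2) = \pi(u_1)\pi(u_2) = \alpha_1\alpha_2 = \alpha$, so $u_1 u_2 \in C_\alpha$, and $\operatorname{supp}(u_1u_2) = F_1 \cup F_2$ is a face of $\Gamma(\alpha)$. One subtlety to handle cleanly: the generators of $\Gamma(\alpha_j)$ are supports of monomials in $C_{\alpha_j}$, but a \emph{face} of $\Gamma(\alpha_j)$ is only a \emph{subset} of such a support; I would address this by noting that if $F_j \subseteq \operatorname{supp}(u_j)$ then $F_1 \cup F_2 \subseteq \operatorname{supp}(u_1) \cup \operatorname{supp}(u_2) = \operatorname{supp}(u_1u_2) \in \Gamma(\alpha)$, and $\Gamma(\alpha)$ is closed under taking subsets. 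This completes both inclusions.

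The main obstacle is making the ``splitting'' step in the first inclusion airtight: one must be sure that no edge of $G$ used in a fibre element of $C_\alpha$ can have one endpoint in $\operatorname{supp}(\alpha_1)$ and the other in $\operatorname{supp}(\alpha_2)$ — this is exactly where the hypothesis $G_{\operatorname{supp}(\alpha)} = G_{\operatorname{supp}(\alpha_1)} \sqcup G_{\operatorname{supp}(\alpha_2)}$ is used, and it also implicitly requires observing that $\operatorname{supp}(\alpha_1)$ and $\operatorname{supp}(\alpha_2)$ partition $\operatorname{supp}(\alpha)$ (immediate from $\gcd(\alpha_1,\alpha_2)=1$). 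Everything else is bookkeeping with the multiplicativity of $\pi$ and the downward-closure of simplicial complexes.
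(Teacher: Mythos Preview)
Your proposal is correct and follows essentially the same route as the paper's proof: both directions are established by splitting (respectively, multiplying) fibre monomials according to which side of the vertex partition each edge lies on, using the disjoint-union hypothesis to guarantee that no edge straddles $\operatorname{supp}(\alpha_1)$ and $\operatorname{supp}(\alpha_2)$. The paper works with facets rather than arbitrary generating faces and spells out the disjointness $\operatorname{supp}(w_1)\cap\operatorname{supp}(w_2)=\emptyset$ in the reverse inclusion, but the logical structure is the same as yours.
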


\begin{proof}  Note that ${\rm gcd}(\alpha_1,\alpha_2) = 1$ 
if and only if ${\rm supp}(\alpha_1) \cap {\rm supp}(\alpha_2) = \emptyset$.
If $F = \{e_{i_1}, \cdots, e_{i_s}\} \in \Gamma(\alpha)$ is a facet, then there exists
$w = e_{i_1}^{c_1}\cdots e_{i_s}^{c_s} \in k[E]$ such that $\pi(w) = \alpha$.
Thus for any variable $e_{i_j}$ that divides $w$, if $e_{i_j} = \{x_k,x_l\}$, 
then $\{x_k,x_l\} \in {\rm supp}(\pi(w)) =
{\rm supp}(\alpha)$.  In other words, $e_{i_j}$ is an edge in the induced
graph $G_{{\rm supp}(\alpha)}$.  Because   $G_{{\rm supp}(\alpha)} = 
G_{{\rm supp}(\alpha_1)} \sqcup G_{{\rm supp(\alpha_2)}}$, we have that 
$e_{i_j}$ is an edge in either $G_{{\rm supp}(\alpha_1)}$ or
$G_{{\rm supp(\alpha_2)}}$, but not both.  So, after a possible relabeling,
we can assume that $e_{i_1},\ldots,e_{i_t}$ are all
edges of $G_{{\rm supp}(\alpha_1)}$ and $e_{i_{t+1}},\ldots,e_{i_s}$ are
all edges of $G_{{\rm supp(\alpha_2)}}$.   We thus have 
\[\alpha = \pi(w) = \pi(e_{i_1}^{c_1}\cdots e_{i_t}^{c_t})\pi(e_{i_{t+1}}^{c_{t+1}}
\cdots e_{i_s}^{c_s}).\]
Furthermore,  
${\rm supp}(\pi(e_{i_1}^{c_1}\cdots e_{i_t}^{c_t})) \subseteq {\rm supp}(\alpha_1)$
and ${\rm supp}(\pi(e_{i_{t+1}}^{c_{t+1}}\cdots e_{i_s}^{c_s})) 
\subseteq {\rm supp}(\alpha_2)$.  Because ${\rm supp}(\alpha_1) \cap {\rm supp}(\alpha_2) 
= \emptyset$ and $\alpha = \alpha_1\alpha_2$, we must have
\[\pi(e_{i_1}^{c_1}\cdots e_{i_t}^{c_t})) = \alpha_1  ~~\mbox{and}~~ 
\pi(e_{i_{t+1}}^{c_{t+1}}\cdots e_{i_s}^{c_s}) = \alpha_2.\]
Hence $e_{i_1}\cdots e_{i_t} \in \Gamma(\alpha_1)$ and $e_{i_{t+1}}\cdots e_{i_s}
\in \Gamma(\alpha_2)$, and thus $F \in \Gamma(\alpha_1) \star
\Gamma(\alpha_2)$.

For the reverse containment, let $F_1 \in \Gamma(\alpha_1)$ and $F_2 \in \Gamma(\alpha_2)$
be facets of $\Gamma(\alpha_1)$ and $\Gamma(\alpha_2)$, respectively.   So,
there exists monomials $w_1, w_2 \in k[E]$ such that $F_1 = {\rm supp}(w_1)$
and $F_2 = {\rm supp}(w_2)$ and $\pi(w_1) = \alpha_1$ and $\pi(w_2) = \alpha_2$.
But then $\alpha = \alpha_1\alpha_2 = \pi(w_1)\pi(w_2) = \pi(w)$ where $w = 
w_1w_2$.  So ${\rm supp}(w) \in \Gamma(\alpha)$.  Note that 
${\rm supp}(w_1) \cap {\rm supp}(w_2) = \emptyset$.  Indeed, if $e_i \in 
{\rm supp}(w_1) \cap {\rm supp}(w_2)$, then if $e_i = \{x_k,x_l\}$ we would have
$\{x_k,x_l\} \in {\rm supp}(\pi(w_1)) \cap {\rm supp}(\pi(w_2)) = 
{\rm supp}(\alpha_1) \cap {\rm supp}(\alpha_2) = \emptyset$.
So ${\rm supp}(w) = {\rm supp}(w_1)\cup{\rm supp}(w_2)$, and thus $F_1 \cup F_2 \in \Gamma(\alpha)$.
\end{proof}

\begin{corollary}\label{kunneth1}
With the hypotheses as in Theorem \ref{join}
\[\widetilde{H}_i(\Gamma(\alpha);k) = 
\bigoplus_{j+\ell = i-1} \widetilde{H}_j(\Gamma(\alpha_1);k) 
\otimes
\widetilde{H}_\ell(\Gamma(\alpha_2);k) ~~\mbox{for all $i \geq 0$}.\]
\end{corollary}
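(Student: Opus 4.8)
The plan is to deduce this reduced-homology K\"unneth formula from Theorem~\ref{join}, which identifies $\Gamma(\alpha)$ with the simplicial join $\Gamma(\alpha_1) \star \Gamma(\alpha_2)$. The statement is then precisely the classical fact that reduced homology converts joins into tensor products with a degree shift of $1$; so the proof should consist of invoking this fact, with a sentence or two of justification rather than a from-scratch argument.

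Concretely, I would first recall the standard relationship between the join and the topological join, namely that the geometric realization satisfies $|\Gamma(\alpha_1) \star \Gamma(\alpha_2)| \cong |\Gamma(\alpha_1)| * |\Gamma(\alpha_2)|$, the topological join of the two realizations. Then I would cite the reduced homology K\"unneth theorem for joins: for spaces $X$ and $Y$, one has $\widetilde{H}_i(X * Y; k) \cong \bigoplus_{j + \ell = i - 1} \widetilde{H}_j(X; k) \otimes_k \widetilde{H}_\ell(Y; k)$, where the absence of a $\mathrm{Tor}$ term is because we are working with field coefficients. This is a textbook statement (e.g., Milnor's work on joins, or it can be derived from the suspension isomorphism together with the fact that $X * Y$ is homotopy equivalent to the suspension of the smash-type construction, or proved directly via the reduced Mayer--Vietoris sequence applied to the two ``cone halves'' of the join). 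Since $k$ is a field, every module involved is free, so the Tor terms in the general K\"unneth formula vanish and we get the clean direct-sum decomposition stated.

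Alternatively, to keep the argument self-contained and combinatorial, I would give the Mayer--Vietoris derivation: write $\Gamma(\alpha) = \Gamma(\alpha_1) \star \Gamma(\alpha_2)$ as the union of $A = \Gamma(\alpha_1) \star C(\Gamma(\alpha_2))$-type pieces—more precisely, cover the join by the open stars of the two vertex sets—so that the two pieces are each contractible (each deformation retracts onto one of the two factors' cones) and their intersection deformation retracts onto $\Gamma(\alpha_1) \times \Gamma(\alpha_2)$ (or rather is homotopy equivalent to the product $|\Gamma(\alpha_1)| \times |\Gamma(\alpha_2)|$). The reduced Mayer--Vietoris sequence then collapses, since the reduced homology of each contractible piece vanishes, to give isomorphisms $\widetilde{H}_i(\Gamma(\alpha); k) \cong \widetilde{H}_{i-1}(|\Gamma(\alpha_1)| \times |\Gamma(\alpha_2)|; k)$, and then the ordinary K\"unneth theorem over the field $k$ (no Tor terms) for the product, combined with the reduced-homology bookkeeping $\widetilde{H}_*(X \times Y) \cong (\widetilde{H}_*(X) \otimes \widetilde{H}_*(Y)) \oplus \widetilde{H}_*(X) \oplus \widetilde{H}_*(Y)$ rearranged appropriately, yields the asserted formula. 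Either route works; I would likely just cite the join K\"unneth formula and move on.

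The only real subtlety—hardly an obstacle—is the careful treatment of reduced versus unreduced homology and the edge cases where one of $\Gamma(\alpha_1)$, $\Gamma(\alpha_2)$ is empty or a single point (so that its reduced homology is concentrated in degree $-1$ or is entirely zero); one should check that the indexing convention $\widetilde{H}_{-1}(\emptyset; k) = k$ makes the formula hold uniformly, and in particular that it is consistent with how $\Gamma(\alpha)$ behaves when $\alpha = \alpha_1$ (i.e., $\alpha_2 = 1$). Since in all applications in this paper the relevant $\alpha_i$ are genuine monomials with nonempty support arising from fibres $C_{\alpha_i} \neq \emptyset$, these degenerate cases will not actually arise, and I would note this in passing. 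The main content is entirely supplied by Theorem~\ref{join}; the corollary is just the homological shadow of that geometric statement.
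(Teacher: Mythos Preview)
Your proposal is correct and matches the paper's approach: the paper does not give a proof at all, but simply remarks that the result follows from the K\"unneth formula and cites Emtander's thesis \cite{E} for the specific formulation. Your write-up actually supplies more justification than the paper does, since you sketch the Mayer--Vietoris derivation and discuss the degenerate cases.
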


\begin{remark}
The above result follows from the K\"unneth formula.
Our formulation is based upon the one found in the thesis
of E. Emtander (see the bottom of page 8 in \cite{E}).
\end{remark}

As a consequence of the above results, lower bounds on ${\rm reg}(I_G)$
can then be found by bounding the regularity of the toric ideals of
induced subgraphs.

\begin{theorem}\label{inducedregularity}
Let $G$ be a graph which contains an induced subgraph of the form
$H = H_1 \sqcup \cdots \sqcup H_t$. Then 
\[
{\rm reg}(I_G) \geq {\rm reg}(I_{H_1}) + \cdots + {\rm reg}(I_{H_t}) 
-t+1.\]
\end{theorem}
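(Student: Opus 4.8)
The plan is to derive this as a fairly formal consequence of Theorem~\ref{join} and Corollary~\ref{kunneth1}. It is convenient to pass from the standard $\mathbb{N}$-grading on $k[E]$ to its $\mathbb{N}^n$-grading: a monomial $\alpha$ lies in $k[E]$-degree $\frac12|\alpha|$, so that ${\rm reg}(I_G) = \max\{\,\frac12|\alpha| - i \mid \beta_{i,\alpha}(I_G)\neq 0\,\}$. The first ingredient is that Betti numbers are unchanged by passing to an induced subgraph in the relevant multidegrees: if $W\subseteq V$ and ${\rm supp}(\alpha)\subseteq W$, then every $w\in C_\alpha$ uses only variables $e_j$ with $e_j\subseteq W$ (since $\pi(w)=\alpha$ forces each edge of $w$ into ${\rm supp}(\alpha)\subseteq W$), and these are precisely the edges of $G_W$; hence $\Gamma_G(\alpha)$ and $\Gamma_{G_W}(\alpha)$ have the same faces, and Theorem~\ref{toricbetti} gives $\beta_{i,\alpha}(I_G)=\beta_{i,\alpha}(I_{G_W})$ for all $i$. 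Because $H$ is induced in $G$ and the $H_\ell$ partition $V(H)$, each $H_\ell$ coincides with $G_{V(H_\ell)}$, so this identity applies to every multidegree supported on $V(H_\ell)$.

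Next I would assemble a multidegree for $G$ out of optimal multidegrees for the $H_\ell$. For each $\ell$, the formula for regularity above produces a monomial $\alpha_\ell$ with ${\rm supp}(\alpha_\ell)\subseteq V(H_\ell)$ and an integer $i_\ell$ with $\beta_{i_\ell,\alpha_\ell}(I_{H_\ell})\neq 0$ and $\frac12|\alpha_\ell|-i_\ell = {\rm reg}(I_{H_\ell})$. Put $\alpha=\alpha_1\cdots\alpha_t$. Since $G$ has no edges joining distinct $V(H_\ell)$'s, the supports of the $\alpha_\ell$ are pairwise disjoint and, for every partial product, $G_{{\rm supp}(\alpha_1\cdots\alpha_k)} = G_{{\rm supp}(\alpha_1)}\sqcup\cdots\sqcup G_{{\rm supp}(\alpha_k)}$. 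Peeling off one factor at a time and applying Theorem~\ref{join} repeatedly, then using the induced-subgraph identity of the previous paragraph, gives
\[\Gamma_G(\alpha) \;=\; \Gamma_{H_1}(\alpha_1)\star\cdots\star\Gamma_{H_t}(\alpha_t).\]

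Now iterate Corollary~\ref{kunneth1}: $\widetilde H_i(\Gamma_G(\alpha);k)$ is the direct sum, over all $(j_1,\dots,j_t)$ with $j_1+\cdots+j_t = i-(t-1)$, of the tensor products $\widetilde H_{j_1}(\Gamma_{H_1}(\alpha_1);k)\otimes\cdots\otimes\widetilde H_{j_t}(\Gamma_{H_t}(\alpha_t);k)$. Take $i = i_1+\cdots+i_t+(t-1)$. The summand indexed by $(i_1,\dots,i_t)$ equals $\widetilde H_{i_1}(\Gamma_{H_1}(\alpha_1);k)\otimes\cdots\otimes\widetilde H_{i_t}(\Gamma_{H_t}(\alpha_t);k)$, which is nonzero by the choice of the $\alpha_\ell$ and cannot cancel because it is a direct summand; hence $\widetilde H_i(\Gamma_G(\alpha);k)\neq 0$, so $\beta_{i,\alpha}(I_G)\neq 0$. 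Therefore
\[{\rm reg}(I_G) \;\geq\; \tfrac12|\alpha| - i \;=\; \sum_{\ell=1}^t\left(\tfrac12|\alpha_\ell| - i_\ell\right) - (t-1) \;=\; \sum_{\ell=1}^t {\rm reg}(I_{H_\ell}) - (t-1),\]
as claimed.

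I do not anticipate a real obstacle: the argument is bookkeeping built on Theorem~\ref{join} and Corollary~\ref{kunneth1}. The two points needing care are the grading conversion — it is because $\alpha$ occurs in $k[E]$-degree $\frac12|\alpha|$ rather than $|\alpha|$ that the correction term comes out as $-(t-1)$ and not $-2(t-1)$ — and the induction promoting Theorem~\ref{join} from two factors to $t$, which comes down to noting that every partial product $\alpha_1\cdots\alpha_k$ still induces a disjoint union in $G$, valid since the $H_\ell$ partition $V(H)$ and $H=G_{V(H)}$. The same reasoning run backwards shows the inequality is an equality when $H=G$ (provided each $I_{H_\ell}\neq 0$), but only the displayed inequality is needed in what follows.
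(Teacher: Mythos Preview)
Your proof is correct and follows essentially the same route as the paper: choose witnessing multidegrees $\alpha_\ell$ for each ${\rm reg}(I_{H_\ell})$, set $\alpha=\alpha_1\cdots\alpha_t$, apply Theorem~\ref{join} iteratively to get $\Gamma(\alpha)=\Gamma(\alpha_1)\star\cdots\star\Gamma(\alpha_t)$, and then use the K\"unneth formula (Corollary~\ref{kunneth1}) at $i=i_1+\cdots+i_t+(t-1)$. You are in fact slightly more careful than the paper in making explicit why $\Gamma_G(\alpha_\ell)=\Gamma_{H_\ell}(\alpha_\ell)$ and in justifying the induction on the number of join factors.
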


\begin{proof}
We first note that for any graph $K$, 
\[\beta_{i,j}(I_K)  = \sum_{\alpha \in \mathbb{N}^n, ~|\alpha| = 2j} \beta_{i,\alpha}(I_K)\]
where $|\alpha|$ denotes $a_1+\cdots+a_n$ for $\alpha = (\alpha_1, \dots, \alpha_n) \in \mathbb{N}^n$.  
Note that since the homomorphism $\pi:K[E] \rightarrow K[V]$ has degree two, the appropriate inner degree of the Betti number is $j$ not $|\alpha| = 2j$.

Relabel the vertices of  $G$ so that
$\{x_{\ell,1},\ldots,x_{\ell,n_\ell}\}$ are the vertices of $H_\ell$ 
for $\ell=1,\ldots,t$.
If $r_\ell = {\rm reg}(I_{H_\ell})$, then there exists an integer $i_\ell$ and
a monomial $\alpha_\ell$ with ${\rm supp}(\alpha_\ell) \subseteq \{x_{\ell,1},\ldots,x_{\ell,n_\ell}\}$ 
such that $\beta_{{i_\ell},\alpha_\ell}(I_{H_\ell}) \neq 0$ and 
$r_\ell = \frac{|\alpha_\ell|}{2}-i_\ell$.  Thus, by Theorem
\ref{toricbetti}
\[\widetilde{H}_{i_\ell}(\Gamma(\alpha_\ell);k) \neq 0 ~~\mbox{for $\ell=1,\ldots,t$}.\]

Set $\alpha = \alpha_1\cdots \alpha_t$.
It follows by repeated use of Theorem \ref{join} that 
\[\Gamma(\alpha) = \Gamma(\alpha_1)\star \Gamma(\alpha_2) \star \cdots \star \Gamma(\alpha_t).\]
Consequently, by Corollary \ref{kunneth1} we will have 
\begin{equation}\label{kunneth}
\widetilde{H}_i(\Gamma(\alpha);k) = \bigoplus_{j_1+\cdots + j_t = i-t+1}
\widetilde{H}_{j_1}(\Gamma(\alpha_1);k) \otimes \cdots \otimes \widetilde{H}_{j_t}(\Gamma(\alpha_t);k).
\end{equation}
If $i = i_1+\cdots + i_t + (t-1)$, then 
by \eqref{kunneth} we have
$\widetilde{H}_i(\Gamma(\alpha);k) \neq 0$.  So
Theorem \ref{toricbetti}
implies $\beta_{i,\alpha}(I_G) \neq 0$.
Thus $\beta_{i,j}(I_G) \neq 0$ with 
\begin{eqnarray*}
2j &= &|\alpha| = |\alpha_1| + \cdots + |\alpha_t| =  2(r_1  + i_1) + \cdots + 2(r_t + i_t).
\end{eqnarray*}
We thus get the desired inequality:
\begin{eqnarray*}
{\rm reg}(I_G) &=& \max\{j-i ~|~ \beta_{i,j}(I_G) \neq 0\} \\ 
& \geq & (r_1+ i_1) + \cdots + (r_t+i_t) - (i_1+\cdots + i_t + (t-1)) \\
& = & {\rm reg}(I_{H_1}) + \cdots + {\rm reg}(I_{H_t}) - t + 1.
\end{eqnarray*}
\end{proof}

We will also need the following relationship between the graded Betti numbers
of an ideal and those of its initial ideal.
See \cite[Theorem 22.9]{P} and \cite[Corollary 22.13]{P} for a proof.

\begin{theorem}\label{initialidealbound}  Fix a monomial order $<$ on $R = k[x_1\ldots,x_n]$.
Let $I$ be a homogeneous ideal of $R$, and let ${\rm in}_<(I)$ denote
the initial ideal of $I$.  Then for all $i,j \geq 0$
\[\beta_{i,j}(I) \leq \beta_{i,j}({\rm in}_<(I)).\]
Furthermore, if ${\rm in}_<(I)$ has a linear resolution, then we
have an equality of all $i,j \geq 0$.
\end{theorem}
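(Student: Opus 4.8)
The plan is to prove this by a \emph{Gr\"obner degeneration} together with the upper semicontinuity of graded Betti numbers in a flat family. Since $\beta_{i,j}(I)=\beta_{i+1,j}(R/I)$ for $i\geq 0$, it is equivalent to compare the Betti numbers of $R/I$ with those of $R/{\rm in}_<(I)$. The first step is to replace the monomial order by a weight vector: for the fixed finitely generated homogeneous ideal $I$ there is a vector $w\in\mathbb{N}^n$ with strictly positive entries such that ${\rm in}_w(I)={\rm in}_<(I)$ (a classical fact; for instance one may take $w$ in the interior of the maximal cone of the Gr\"obner fan of $I$ corresponding to $<$, using that a reduced Gr\"obner basis pins down that cone). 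So it suffices to prove $\beta_{i,j}(R/I)\leq\beta_{i,j}(R/{\rm in}_w(I))$.

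Next I would form the homogenization family. Fix a $w$-Gr\"obner basis $G$ of $I$ and let $\widetilde I\subseteq R[t]$ be the ideal generated by the $w$-homogenizations ${\rm hom}_w(g)=t^{b_g}\,g(t^{-w_1}x_1,\ldots,t^{-w_n}x_n)$ of the elements $g\in G$, where $b_g$ is the top $w$-weight of $g$. Then $\widetilde I|_{t=1}=I$ is immediate, and $\widetilde I|_{t=0}={\rm in}_w(I)$ is exactly the Gr\"obner basis property. The substantive input is that $R[t]/\widetilde I$ is a \emph{free} $k[t]$-module with $k[t]$-basis the images of the standard monomials of ${\rm in}_w(I)$ --- this is the statement that Gr\"obner degeneration is flat, and is proved with the division algorithm. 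Granting it, $t$ and $t-1$ are nonzerodivisors on $R[t]/\widetilde I$, so tensoring over $R[t]$ is exact on the relevant resolutions and
\[R[t]/\widetilde I\otimes_{R[t]}R[t]/(t)=R/{\rm in}_w(I),\qquad R[t]/\widetilde I\otimes_{R[t]}R[t]/(t-1)=R/I.\]

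To extract the inequality I would grade $R[t]$ in two ways at once: the \emph{standard} grading with $\deg x_i=1$ and $\deg t=0$, and the \emph{$w$-grading} with $\deg x_i=w_i$ and $\deg t=1$. The ideal $\widetilde I$ is homogeneous for both, and the $w$-grading makes $R[t]$ a graded $k$-algebra with degree-zero piece $k$; hence $R[t]/\widetilde I$ has a bigraded free resolution $F_\bullet$ that is minimal with respect to the $w$-grading, i.e.\ with differential entries in $(x_1,\ldots,x_n,t)$. Since $t$ is a nonzerodivisor on the module and is homogeneous for both gradings, $F_\bullet\otimes_{R[t]}R[t]/(t)$ is a free resolution of $R/{\rm in}_w(I)$ whose differentials have entries in $(x_1,\ldots,x_n)$, hence is minimal; so the rank of $F_i$ in standard degree $j$ equals $\beta_{i,j}(R/{\rm in}_w(I))$. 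Since $t-1$ is a nonzerodivisor that is homogeneous for the standard grading (because $\deg t=0$), $F_\bullet\otimes_{R[t]}R[t]/(t-1)$ is a standard-graded free resolution of $R/I$ that need not be minimal, so its rank in degree $j$ only dominates $\beta_{i,j}(R/I)$. Comparing ranks gives $\beta_{i,j}(R/I)\leq\beta_{i,j}(R/{\rm in}_w(I))=\beta_{i,j}(R/{\rm in}_<(I))$, and hence the asserted inequality for $I$.

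For the second assertion, suppose ${\rm in}_<(I)$ has a $d$-linear resolution, so $\beta_{i,j}({\rm in}_<(I))=0$ whenever $j\neq i+d$; then the inequality just proved forces $\beta_{i,j}(I)=0$ for $j\neq i+d$ as well. By Macaulay's theorem $R/I$ and $R/{\rm in}_<(I)$ have the same Hilbert function, hence the same $K$-polynomial, which unwinds to $\sum_{i,j}(-1)^i\beta_{i,j}(I)t^j=\sum_{i,j}(-1)^i\beta_{i,j}({\rm in}_<(I))t^j$; for each fixed $j$ at most one index $i$ contributes to either side, so comparing the coefficient of $t^j$ forces $\beta_{i,j}(I)=\beta_{i,j}({\rm in}_<(I))$ for all $i,j$. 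The one genuinely delicate step is the flatness of the degeneration in the second paragraph; the rest is formal bookkeeping with the two gradings, and a complete treatment is given in \cite[Theorem~22.9, Corollary~22.13]{P}.
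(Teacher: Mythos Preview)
Your argument is correct and is precisely the standard Gr\"obner degeneration proof. Note that the paper does not actually prove this theorem: it merely states it and refers the reader to \cite[Theorem~22.9 and Corollary~22.13]{P}. Your sketch reconstructs that argument faithfully --- the flat one-parameter family via $w$-homogenization, the bigraded minimal free resolution over $R[t]$, and the specialization at $t=0$ (yielding a minimal resolution) versus $t=1$ (yielding a possibly non-minimal one) --- and your treatment of the linear-resolution case via the Hilbert series identity is the standard way to upgrade the inequality to an equality. There is nothing to add; you have supplied exactly the proof the paper outsources.
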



\section{A Lower bound on the regularity of $I_G$}

By Theorem \ref{inducedregularity}, we can find
lower bounds for ${\rm reg}(I_G)$  
if we identify induced subgraphs of $G$ whose regularity is known.
We carry out this program by finding a lower bound on the regularity
of toric ideals associated to the complete bipartite graph $K_{n,n}$.  

For this section, we will find it expedient to write the vertex
set of $K_{n,n}$ as
$V = \{x_1, \dots, x_n, y_1, \dots, y_n\}$ and edge set as
$E = \{e_{i,j} = \{x_i,y_j\} ~|~ 1 \leq i \leq n, ~~ 1 \leq j \leq n\}$.   
With this notation, our goal is to prove the following result:

\begin{theorem}\label{Nonzerobetti1}
Let $n \geq 2$ and $\alpha = (x_1\cdots x_ny_1 \cdots y_n)^{n-1}$.
Then $
\beta_{n^2-2n,\alpha}(I_{K_{n,n}}) \neq 0,$
and consequently,
\[{\rm reg}(I_{K_{n,n}} )\geq  n.\]
\end{theorem}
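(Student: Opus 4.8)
The plan is to identify $\Gamma(\alpha)$ for $\alpha = (x_1\cdots x_n y_1\cdots y_n)^{n-1}$ explicitly as a simplicial complex on the $n^2$ vertices $\{e_{i,j}\}$, and then compute its reduced homology in the relevant degree. First I would analyze the fibre $C_\alpha$: a monomial $w = \prod e_{i,j}^{b_{i,j}}$ lies in $C_\alpha$ precisely when, for each $i$, $\sum_j b_{i,j} = n-1$ (the exponent of $x_i$) and for each $j$, $\sum_i b_{i,j} = n-1$ (the exponent of $y_j$). In other words, the exponent matrix $B = (b_{i,j})$ is an $n\times n$ nonnegative integer matrix with all row sums and column sums equal to $n-1$. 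The faces of $\Gamma(\alpha)$ are the supports of such matrices, i.e. the sets $S \subseteq \{(i,j)\}$ that occur as $\{(i,j) : b_{i,j} > 0\}$ for some such $B$. I expect that a set $S$ is a face exactly when its bipartite incidence graph (on $\{x_i\}\cup\{y_j\}$, with an edge for each $(i,j)\in S$) admits a nonnegative weighting supported on $S$ with all vertex-weights $n-1$; by a standard flow/Hall-type argument this holds iff $S$, viewed as a bipartite graph, has no "deficient" vertex set — and one should check this reduces to: $S$ contains no isolated vertex obstruction, leading to the identification of $\Gamma(\alpha)$ with a complex whose facets are the complements of perfect matchings of $K_{n,n}$, or something equivalent. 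The key computational claim I would aim for is that $\Gamma(\alpha)$ is homotopy equivalent to a wedge of $(n^2-2n)$-spheres, so that $\widetilde{H}_{n^2-2n}(\Gamma(\alpha);k)\neq 0$.

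Concretely, I suspect the cleanest route is to show $\Gamma(\alpha)$ is the Alexander dual (or the boundary complex) of a simpler object. Note the ambient simplex on $n^2$ vertices has dimension $n^2-1$, and we want nonvanishing homology in dimension $n^2-2n = (n^2-1) - (2n-1)$. Since $\dim K_{n,n}$ as the complete bipartite graph has $2n$ vertices and a perfect matching has $n$ edges, the "missing faces" should have size related to $n$. I would try to prove: the minimal non-faces of $\Gamma(\alpha)$ are exactly the edge-sets of perfect matchings of $K_{n,n}$ (each such set has $n$ elements, and removing all $n$ of these edges from the weighting forces some row or column sum to drop below $n-1$ — wait, one needs $n-1 \geq 1$, i.e. $n\geq 2$, which is the hypothesis). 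If that combinatorial characterization holds, then $\Gamma(\alpha)$ is the complex whose Stanley–Reisner ideal is generated by the $n!$ squarefree monomials $\prod_{i} e_{i,\sigma(i)}$ over permutations $\sigma$, and I would compute its homology via Hochster's formula or by recognizing it as a well-studied complex (its facets being complements of transversals).

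The main obstacle will be the homology computation itself — proving $\widetilde{H}_{n^2-2n}(\Gamma(\alpha);k)\neq 0$ rather than just describing $\Gamma(\alpha)$. I would approach it by induction on $n$, or by exhibiting an explicit nonzero cycle: take the cycle supported on a "sub-board" — e.g. fix the placement of edges outside a $2\times 2$ block, and use the boundary of the square/octahedron coming from the $2\times 2$ minor (which gives $\mathrm{reg}(I_{K_{2,2}}) \geq 2$ as the base case, since $\Gamma$ there is a 4-cycle, an $S^1$). More promisingly, I would try to show directly that $\Gamma(\alpha)$ contains the full $(n^2-2n)$-skeleton issue is avoided and instead find an induced subcomplex that is a sphere: delete one vertex $e_{n,n}$ and argue via the long exact sequence of the pair, reducing to $\lk$ and $\del$ at $e_{n,n}$, each of which relates to a $\Gamma$ for $K_{n-1,n-1}$ (after accounting for the changed multidegree). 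Getting the bookkeeping on multidegrees right in that reduction — since deleting $e_{n,n}$ and lowering the $x_n,y_n$ exponents does not land on the $K_{n-1,n-1}$ fibre on the nose — is the delicate point, and I expect that is where the real work lies. Once $\widetilde{H}_{n^2-2n}(\Gamma(\alpha);k)\neq 0$ is established, Theorem~\ref{toricbetti} gives $\beta_{n^2-2n,\alpha}(I_{K_{n,n}})\neq 0$, and since $|\alpha| = 2n(n-1)$ so the homological degree pair is $(i,j) = (n^2-2n,\, n(n-1))$, we get $\mathrm{reg}(I_{K_{n,n}}) \geq j - i = n(n-1) - (n^2-2n) = n$, as claimed.
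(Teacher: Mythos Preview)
Your high-level strategy is right --- identify $\Gamma(\alpha)$, show $\widetilde{H}_{n^2-2n}(\Gamma(\alpha);k)\neq 0$, and invoke Theorem~\ref{toricbetti} --- but your combinatorial description of $\Gamma(\alpha)$ is wrong, and this is where the argument breaks. The minimal non-faces of $\Gamma(\alpha)$ are \emph{not} the perfect matchings of $K_{n,n}$. In fact each perfect matching $\{e_{1,\sigma(1)},\dots,e_{n,\sigma(n)}\}$ is a \emph{face}: the matrix with $b_{i,\sigma(i)}=n-1$ and all other entries zero has every row and column sum equal to $n-1$, so the matching is even a facet. Already for $n=2$ this is visible: $\Gamma(\alpha)$ consists of the two disjoint edges $\{e_{1,1},e_{2,2}\}$ and $\{e_{1,2},e_{2,1}\}$, which are exactly the two perfect matchings, not the non-faces. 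So the Stanley--Reisner ideal you propose, generated by the $n!$ matching monomials, is not $I(\Gamma(\alpha))$, and neither the Hochster computation nor the $\lk/\del$ induction you sketch can proceed from it.

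The correct minimal non-faces are the $2n$ ``rows'' and ``columns'': the sets $\{e_{i,1},\dots,e_{i,n}\}$ for $1\le i\le n$ and $\{e_{1,j},\dots,e_{n,j}\}$ for $1\le j\le n$. If all $n$ edges through $x_i$ lie in the support of $B$ then the $i$-th row sum is at least $n>n-1$, so these are non-faces; conversely one checks (by a greedy extension argument) that any squarefree set missing at least one entry from every row and every column is a face. This is the content of the paper's Theorem~\ref{srideal}. With only $2n$ generators, the paper then runs the Taylor complex on $I(\Gamma(\alpha))$, shows the subcomplex $\Delta_{<w}$ for $w=\prod_{i,j} e_{i,j}$ is pure shellable of dimension $2n-3$ with its last facet glued along its entire boundary, hence $\widetilde{H}_{2n-3}(\Delta_{<w};k)\neq 0$, and transfers this through Hochster's formula to $\widetilde{H}_{n^2-2n}(\Gamma(\alpha);k)\neq 0$. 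Once you replace your matching conjecture with the row/column description, an inductive $\lk/\del$ argument may also succeed, but the paper's shellability route is cleaner.
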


Theorems \ref{inducedregularity} and \ref{Nonzerobetti1}
then combine to give the following main result:

\begin{corollary} \label{finalcor}
 Let $G$ be a finite simple graph.  Suppose that
$G$ has an induced subgraph $H$ of the form $H = K_{n_1,n_1} \sqcup
\cdots \sqcup K_{n_t,n_t}$ with each $n_i \geq 2$.  Then
\[{\rm reg}(I_G) \geq n_1+n_2 + \cdots + n_t - (t-1).\]
\end{corollary}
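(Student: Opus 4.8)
The plan is to deduce Corollary~\ref{finalcor} as an immediate consequence of the two results that precede it in the paper, namely Theorem~\ref{inducedregularity} and Theorem~\ref{Nonzerobetti1}. So this is really a ``bookkeeping'' proof: the substantive work has already been done, and what remains is to combine the pieces correctly.

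First I would apply Theorem~\ref{Nonzerobetti1} to each of the $t$ complete bipartite pieces $K_{n_i,n_i}$ appearing in $H$. Since each $n_i \geq 2$, the hypotheses of that theorem are met, and we obtain
\[
{\rm reg}(I_{K_{n_i,n_i}}) \geq n_i \quad \text{for } i = 1,\ldots,t.
\]
Next I would apply Theorem~\ref{inducedregularity} with $H_i = K_{n_i,n_i}$: since $G$ contains $H = K_{n_1,n_1} \sqcup \cdots \sqcup K_{n_t,n_t}$ as an induced subgraph, that theorem yields
\[
{\rm reg}(I_G) \geq {\rm reg}(I_{K_{n_1,n_1}}) + \cdots + {\rm reg}(I_{K_{n_t,n_t}}) - t + 1.
\]
Substituting the lower bounds ${\rm reg}(I_{K_{n_i,n_i}}) \geq n_i$ into the right-hand side gives
\[
{\rm reg}(I_G) \geq n_1 + n_2 + \cdots + n_t - (t-1),
\]
which is exactly the claimed inequality. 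A small point worth checking explicitly is that the disjoint-union hypothesis of Theorem~\ref{inducedregularity} is satisfied with $H_i = K_{n_i,n_i}$ on disjoint vertex sets — but this is built into the statement that $H = K_{n_1,n_1} \sqcup \cdots \sqcup K_{n_t,n_t}$ is an induced subgraph of $G$, so nothing further is needed.

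I do not anticipate any genuine obstacle here, since both ingredients are already established. The only thing to be careful about is the arithmetic of the $-t+1$ (equivalently $-(t-1)$) correction term coming from the K\"unneth formula in Theorem~\ref{inducedregularity}, and making sure the lower bounds from Theorem~\ref{Nonzerobetti1} are substituted in the correct direction (they are lower bounds, and the right-hand side of the inequality in Theorem~\ref{inducedregularity} is monotone increasing in each ${\rm reg}(I_{H_i})$, so the substitution preserves the inequality). If one wanted to be fully self-contained one could instead invoke the multigraded statement $\beta_{n_i^2 - 2n_i, \alpha_i}(I_{K_{n_i,n_i}}) \neq 0$ directly and run the K\"unneth argument once, but routing through the two named theorems is cleaner and shorter.
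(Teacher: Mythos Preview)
Your proposal is correct and matches the paper's approach exactly: the paper does not even write out a proof of Corollary~\ref{finalcor}, but simply states that it follows by combining Theorem~\ref{inducedregularity} with Theorem~\ref{Nonzerobetti1}, which is precisely the two-step argument you give.
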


\begin{remark}\label{edgeideal}
Corollary \ref{finalcor}
has a similar ``flavour'' to a result about edge ideals.
The {\it edge ideal} of a graph $G = (V,E)$
is the ideal $I(G) = \langle x_ix_j ~|~ \{x_i,x_j\} \in E \rangle$.
Katzman \cite{K2} proved  that the induced
matching number of $G$ plus one is a lower bound for ${\rm reg}(I(G))$.  The
induced matching number is the maximum number of pairwise disjoint edges
such that the induced graph on the vertices of the edges in the matching
is precisely the set of disjoint edges.  Since a disjoint edge 
is a $K_{1,1}$, Katzman's result is equivalent to the statement
that if $G$ is a graph that contains an induced subgraph
of the form $H = K_{1,1} \sqcup K_{1,1} \sqcup \cdots \sqcup K_{1,1}$ ($t$ copies), 
then ${\rm reg}(I(G)) \geq t + 1$.
\end{remark}

We first outline our strategy to prove Theorem \ref{Nonzerobetti1}.
We will focus on the simplicial complex $\Gamma(\alpha)$
when $\alpha = (x_1\dots x_ny_1\dots y_n)^{n-1}$.  We
first show how to construct the generators of the Stanley-Reisner ideal $I(\Gamma(\alpha))$. 
Using the Taylor
resolution, we then show that the $\mathbb{N}^{2n}$-graded Betti number
$\beta_{n^2-n,w}(I(\Gamma(\alpha))) \neq 0$ with $w = e_{1,1}\cdots e_{n,n}$. 
By using Hochster's formula, we are then able to translate
this result into a statement about the non-vanishing 
of $\widetilde{H}_i(\Gamma(\alpha);k)$ when $i = n^2-2n$.  
Then Theorem \ref{toricbetti} implies
$\beta_{n^2-2n,\alpha}(I_{K_{n,n}}) \neq 0$.

We begin by describing the generators of the
Stanley-Reisner ideal of $\Gamma(\alpha)$.

\begin{theorem}\label{srideal}
Let $G = K_{n,n}$ with $n \geq 2$.
If $\alpha = (x_1\cdots x_ny_1 \cdots y_n)^{n-1}$, then 
\begin{eqnarray*}
I(\Gamma(\alpha)) & = &
( e_{i,1}e_{i,2}\cdots e_{i,n}
~~|~~ i=1,\ldots,n )  + 
( e_{1,j}e_{2,j}\cdots e_{n,j}
~~|~~ j=1,\ldots,n )
\subseteq k[E].
\end{eqnarray*}
\end{theorem}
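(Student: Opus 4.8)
The claim is a combinatorial description of the Stanley–Reisner ideal of $\Gamma(\alpha)$ when $\alpha = (x_1\cdots x_n y_1\cdots y_n)^{n-1}$. Recall that the minimal non-faces of $\Gamma(\alpha)$ are exactly the generators of $I(\Gamma(\alpha))$, so the plan is to (i) characterize which subsets $F \subseteq E = \{e_{i,j}\}$ are faces of $\Gamma(\alpha)$, and (ii) read off the minimal non-faces from that characterization.

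First I would unwind Definition \ref{Gamma} in this setting. A subset $F = \{e_{i_1,j_1},\dots,e_{i_s,j_s}\}$ is a face of $\Gamma(\alpha)$ iff there is a monomial $w = \prod_{k} e_{i_k,j_k}^{b_k}$ with all $b_k \geq 1$ and $\pi(w) = \alpha$; equivalently, thinking of $F$ as a subgraph of $K_{n,n}$ (a set of edges), $F$ is a face iff the edge-degree system "$\deg_{\text{in }F}$ of each vertex equals $n-1$" admits a solution in positive integers $b_k$ supported exactly on $F$. Writing $w$ as a $\{0,1,\dots\}$-weighting of the edges of $F$, the condition $\pi(w)=\alpha$ says: for every vertex $v$ of $K_{n,n}$, the sum of the weights $b_k$ over edges of $F$ incident to $v$ is exactly $n-1$. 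So $F$ is a face of $\Gamma(\alpha)$ iff the subgraph $F \subseteq K_{n,n}$ supports a positive integral "flow" in which every vertex has total incident weight $n-1$.

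Next I would show the non-faces are precisely the sets containing a full row $R_i := \{e_{i,1},\dots,e_{i,n}\}$ or a full column $C_j := \{e_{1,j},\dots,e_{n,j}\}$. One direction: if $F \supseteq R_i$, then in any weighting of $F$ the vertex $x_i$ sees all $n$ edges of $R_i$, each with weight $\geq 1$, so its total incident weight is $\geq n > n-1$; hence $F$ is not a face, and likewise for columns. This shows the listed monomials lie in $I(\Gamma(\alpha))$. For the reverse containment — the real content — I would argue that if $F$ contains no full row and no full column, then $F$ is a face. The cleanest route: it suffices to do this for $F$ maximal with this property, i.e. $F = E \setminus \{e_{1,j_1},\dots,e_{1,j_n}\}$... more carefully, a set missing at least one edge from every row and every column. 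Equivalently, pass to the complement $\bar F$: $F$ contains no full row/column iff $\bar F$ is an edge set of $K_{n,n}$ meeting every row and every column, i.e. $\bar F$ contains a perfect matching $M$ (by König/Hall applied to the bipartite "coverage" condition — actually I'd want: the complement of $F$ has a system of representatives hitting each row, which since it also must hit each column is a subgraph with no isolated vertices; a minimal such is a union of paths/even cycles, but I should just extract a perfect matching). Given a perfect matching $M \subseteq \bar F$, the subgraph $F = E \setminus (\text{something containing } M)$ still contains, for each vertex, at least $n-1$ incident edges. Then I would exhibit an explicit positive weighting: for instance, assign weight $1$ to every edge of $F$ that lies in $K_{n,n} \setminus M$ when $F$ is as large as possible... this needs care because arbitrary $F$ (not just the maximal ones) must also be faces only if they are contained in a facet. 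Since $I(\Gamma(\alpha))$ is determined by the facets, it is enough to check: every edge set $F$ missing an edge from each row and column is contained in a facet. So I would reduce to showing the $n$ sets $F_\sigma := E \setminus \{e_{i,\sigma(i)} : i = 1,\dots,n\}$, ranging over permutations $\sigma \in S_n$, are faces (these are the natural candidate facets), and that every "no full row/column" set sits inside some $F_\sigma$ — the latter by a second application of Hall's theorem to find $\sigma$ with $e_{i,\sigma(i)} \notin F$ for all $i$. For $F_\sigma$ itself: the subgraph $F_\sigma$ is $K_{n,n}$ minus a perfect matching, which is $(n-1)$-regular; assigning weight $1$ to every edge gives $w$ with $\pi(w) = \alpha$ and $\operatorname{supp}(w) = F_\sigma$, so $F_\sigma \in \Gamma(\alpha)$. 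Combining: the minimal non-faces are exactly the rows $R_i$ and columns $C_j$, giving the stated generators of $I(\Gamma(\alpha))$.

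The main obstacle is the converse inclusion, specifically the double use of Hall's marriage theorem: first to show that any edge set $F$ avoiding a full row and a full column is contained in some $K_{n,n}$-minus-a-perfect-matching $F_\sigma$ (one must check the deficiency version of Hall's condition holds for the bipartite graph encoding "$e_{i,j} \notin F$"), and then the routine verification that $F_\sigma$ is a facet via the all-ones weighting. A subtlety to watch: one must confirm these $F_\sigma$ are genuinely the facets — i.e. that no face of $\Gamma(\alpha)$ is larger — which follows because every face avoids a full row and column and hence lies in some $F_\sigma$, and the $F_\sigma$ are themselves faces, so they are exactly the maximal faces. Everything else (the forward inclusion, the bookkeeping with weights) is elementary.
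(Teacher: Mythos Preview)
Your overall plan is sound: characterize the faces of $\Gamma(\alpha)$ as exactly those edge sets $F \subseteq E(K_{n,n})$ containing no full row $R_i$ and no full column $C_j$, and conclude that the minimal non-faces are the $R_i$ and $C_j$. The forward inclusion is fine. The gap is in the reverse inclusion, specifically in your appeal to Hall's theorem. You claim that if $F$ omits at least one edge from every row and every column, then the complement $\bar F$ contains a perfect matching, so $F \subseteq F_\sigma$ for some permutation $\sigma$. This is false: ``no isolated vertex'' in $\bar F$ is far weaker than Hall's condition. For $n=3$, take $F = \{e_{2,2}, e_{2,3}, e_{3,2}, e_{3,3}\}$; then $\bar F = \{e_{1,1}, e_{1,2}, e_{1,3}, e_{2,1}, e_{3,1}\}$, in which $x_2$ and $x_3$ are both adjacent only to $y_1$, so $\bar F$ has no perfect matching and $F$ lies in no $F_\sigma$. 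Yet $F$ \emph{is} a face: $w = e_{1,1}^{2}\, e_{2,2} e_{2,3} e_{3,2} e_{3,3}$ satisfies $\pi(w) = \alpha$ and $F \subseteq {\rm supp}(w)$. In fact ${\rm supp}(w) = \{e_{1,1}\} \cup F$ is a five-element facet of $\Gamma(\alpha)$ not of the form $F_\sigma$, so the $F_\sigma$ are not all the facets and $\Gamma(\alpha)$ is not even pure. The ``deficiency version of Hall'' you allude to does not rescue this.

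The paper avoids the issue by extending \emph{monomials} rather than \emph{sets}: it shows that any squarefree $w$ with $\pi(w) \mid \alpha$ (equivalently, ${\rm supp}(w)$ has no full row or column) can be multiplied by further edge variables $e_{i,j}$, with repetition allowed, until the image under $\pi$ equals $\alpha$; then ${\rm supp}(w)$ sits inside a generating face. The crucial point your set-level argument loses is that a deficient vertex pair $(x_i,y_j)$ can always be repaired by multiplying by $e_{i,j}$ regardless of whether $e_{i,j}$ already occurs. If you prefer to salvage your own framework, note that after subtracting the indicator of $F$ you are seeking a nonnegative integer $n\times n$ matrix with prescribed row sums $n-1-\deg_F(x_i)\ge 0$ and column sums $n-1-\deg_F(y_j)\ge 0$; such a matrix always exists (e.g.\ by the northwest-corner rule), and this replaces the failed Hall step.
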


\begin{proof}
For a fixed $i \in \{1,\ldots,n\}$, consider the monomial
$e_{i,1}\cdots e_{i,n} \in k[E]$.  We wish to show
that $e_{i,1}\cdots e_{i,n} \in I(\Gamma(\alpha))$, so
it is enough to show that $\{e_{i,1},\ldots,e_{i,n}\}
\not\in \Gamma(\alpha)$.
Suppose instead that 
$\{e_{i,1},\ldots,e_{i,n}\}
\in \Gamma(\alpha)$.  Then there would exist a monomial
$w \in k[E]$ such that $\pi(w) = \alpha$, and furthermore
$\{e_{i,1},\ldots,e_{i,n}\} \subseteq 
{\rm supp}(w)$.
Because $x_i$ appears in each edge $e_{i,j}$, we have 
$x_i^{n}$ divides $\pi(e_{i,1}\cdots e_{i,n})$, which
in turn divides $\pi(w)$.  But the exponent of $x_i$
in $\pi(w)$ is $n-1$, so we get a contradiction.  
Thus $\{e_{i,1},\ldots,e_{i,n}\}
\not\in \Gamma(\alpha)$.  
This argument also works for all generators of the second ideal
on the right hand side of the statement.  This demonstrates one containment.

For the reverse containment,
let $w = e_{i_1,j_1}\cdots e_{i_t,j_t}$ be any squarefree monomial
in $I(\Gamma(\alpha))$.  We will first show the following claim.

\noindent
{\it Claim.} There is no monomial $w$ in $I(\Gamma(\alpha))$ such that $\pi(w) \mid \alpha$.  

\noindent
{\it Proof of the Claim.}
Suppose that such a monomial does exist and let $w$ be a maximal such monomial with respect to divisibility.  If $\pi(w) = \alpha$, then ${\rm supp}(w) \in \Gamma(\alpha)$ and hence $w \notin I(\Gamma(\alpha))$.  Therefore $\pi(w)$ must strictly divide $\alpha$.  More precisely, there 
exists $a_1,\ldots,a_n,b_1,\ldots,b_n$ such that 
\[\pi(w) = x_1^{a_1}\cdots x_n^{a_n}y_1^{b_1}\cdots y_n^{b_n}
\mid \alpha = (x_1\cdots x_ny_1\cdots y_n)^{n-1}\]
with $a_i,b_j \leq n-1$ for all $i$ and $j$, and furthermore,
at least one $a_i < n-1$ or $b_j < n-1$.
In fact, since each $e_{i,j}$ is a mapped to $x_iy_j$, 
the degrees of the $x_i$'s in $\pi(w)$ will equal the degree
of the $y_i$'s.  In other words,
$a_1 + \cdots + a_n = b_1 + \cdots + b_n$, and consequently,
there must be at least one $a_i < n -1$ and at least one 
$b_j < n -1$.  

Because there is an edge $e_{i,j}$ between
$x_i$ and $y_j$, 
\[\pi(we_{i,j}) = x_1^{a_1}\cdots x_i^{a_i+1}\cdots x_n^{a_n}y_1^{b_1}\cdots y_j^{b_j+1}\cdots y_n^{b_n}.\]
Note that $\pi(we_{i,j})$ still divides $\alpha$.  If $\pi(we_{i,j})
= \alpha$, that would mean that ${\rm supp}(we_{i,j})$ is a facet of
$\Gamma(\alpha)$ and consequently, $\{e_{i_1,j_1},\ldots,e_{i_t,j_t}\} \in \Gamma(\alpha)$, contradicting the
fact that $w \in I(\Gamma(\alpha))$.  So $we_{i,j}$ strictly divides
$\alpha$, but this contradicts our choice of $w$.

By the claim, for any squarefree monomial 
$w  = e_{i_1,j_1}\cdots e_{i_t,j_t} \in I(\Gamma(\alpha))$, we have $\pi(w)\nmid\alpha$.  
In particular, there exists an  $x_i$ (or $y_j$) such that $x_i^{n} | \pi(w)$ (or $y_j^{n} |\pi(w)$).  We assume there is an
$x_i$ since the proof for the case $y_j$ is the same.
So, among
$e_{i_1,j_1},\ldots,e_{i_t,j_t}$ there are at least $n$ distinct edges that are adjacent
to $x_i$.  But there are only $n$ distinct edges adjacent
to $x_i$ in $K_{n,n}$, namely, $e_{i,1},\ldots,e_{i,n}$.
So $w$ is divisible by $e_{i,1}\cdots e_{i,n}$, and so belongs
to the ideal on the right side of the statement.  
This now completes the proof.
\end{proof}

We now show that 
$\beta_{2n-2,w}(I(\Gamma(\alpha))) \neq 0$ with $w = e_{1,1}\cdots e_{n,n}$
by appealing to the theory of cellular resolutions.  
For an introduction to cellular resolutions we refer the reader to 
\cite{MS} or \cite{P}.  In particular we require the following definition and result.

\begin{definition}  Let $\Delta$ be a simplicial complex whose faces are labelled by monomials, and let $w$ be a monomial.  Then
\[
\Delta_{<w} = \{\sigma \in \Delta \mid \mbox{ the label of $\sigma$ strictly divides $w$}\}.
\]
\end{definition}

\begin{theorem}[\cite{MS}] \label{CellularResolution} If $I$ is a monomial ideal in a polynomial ring $S$ and $\Delta$ is a cell complex which supports a resolution of the quotient $S/I$, then
\[
\beta_{i, w}(I) = \dim_k (\widetilde{H}_{i-1}(\Delta_{<w};k)) \, .
\]
\end{theorem}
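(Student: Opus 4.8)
\noindent\emph{Proof proposal.} The cleanest route --- and the one underlying the treatment in \cite{MS} --- is to identify the graded Betti numbers with $\operatorname{Tor}$, to compute that $\operatorname{Tor}$ by tensoring the given cellular resolution with the residue field $k$, and then to recognize the resulting complex, in each multidegree, as a simplicial chain complex. Write $\mathcal{F}_\Delta$ for the cellular free resolution of $S/I$ supported on $\Delta$: in homological degree $i$ it is $\bigoplus_\sigma S(-a_\sigma)$, the sum over the $(i-1)$-dimensional faces $\sigma$ of $\Delta$, with the empty face contributing the copy of $S$ in homological degree $0$, where $a_\sigma$ is the exponent vector of the monomial labelling $\sigma$ and the differential sends the generator $e_\sigma$ to $\sum_\tau \pm (x^{a_\sigma}/x^{a_\tau})\,e_\tau$ over the codimension-one faces $\tau\subset\sigma$. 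Since deleting the rightmost copy of $S$ turns $\mathcal{F}_\Delta$ into a free resolution of $I$ with all homological degrees lowered by one, $\beta_{i,w}(I)=\beta_{i+1,w}(S/I)=\dim_k H_{i+1}(\mathcal{F}_\Delta\otimes_S k)_w$, so it suffices to compute these homology groups multidegree by multidegree.

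Next I would pin down the $w$-strand $(\mathcal{F}_\Delta\otimes_S k)_w$. Tensoring with $k=S/\mathfrak{m}$ kills every coefficient $x^{a_\sigma}/x^{a_\tau}$ except those equal to $1$, i.e.\ those with $a_\tau=a_\sigma$, and the degree-$w$ part of $\bigoplus_\sigma k(-a_\sigma)$ has the faces with $a_\sigma=w$ as a $k$-basis. So $(\mathcal{F}_\Delta\otimes_S k)_w$ is the complex on $\{e_\sigma\mid a_\sigma=w\}$ with differential $e_\sigma\mapsto\sum_{\tau\subset\sigma,\ a_\tau=w}\pm e_\tau$, which is precisely the reduced relative cellular chain complex $\widetilde{C}_\bullet(\Delta_{\preceq w},\Delta_{<w};k)$ of the pair built from $\Delta_{\preceq w}=\{\sigma\mid x^{a_\sigma}\text{ divides }x^w\}$ and $\Delta_{<w}$ --- the only wrinkle being the homological shift placing a $j$-dimensional face in homological degree $j+1$. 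Hence $\beta_{i,w}(I)=\dim_k\widetilde{H}_i(\Delta_{\preceq w},\Delta_{<w};k)$.

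To finish I would invoke the acyclicity criterion of \cite{MS}: $\mathcal{F}_\Delta$ supports a resolution of $S/I$ if and only if $\widetilde{H}_j(\Delta_{\preceq w};k)=0$ for all $j\ge 0$ and all $w$. Feeding this vanishing into the long exact homology sequence of the pair $(\Delta_{\preceq w},\Delta_{<w})$ collapses it, for $i\ge 1$, to the connecting isomorphism $\widetilde{H}_i(\Delta_{\preceq w},\Delta_{<w};k)\cong\widetilde{H}_{i-1}(\Delta_{<w};k)$, which together with the previous paragraph yields $\beta_{i,w}(I)=\dim_k\widetilde{H}_{i-1}(\Delta_{<w};k)$. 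I expect the friction here to be organizational rather than conceptual: one must keep track of the two homological shifts --- between $I$ and $S/I$, and between the dimension grading on faces and the homological grading on $\mathcal{F}_\Delta$ --- and dispose of the trivial low-degree corner cases (e.g.\ $w=0$, where both sides vanish for $i\ge 1$, or $i=0$), with the real content living entirely in the acyclicity criterion and the exact sequence of a pair.
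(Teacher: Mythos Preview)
The paper does not supply a proof of this theorem; it is quoted from \cite{MS} and used as a black box. Your argument is correct and is in fact the standard one given in Miller--Sturmfels: compute $\operatorname{Tor}$ via the cellular resolution, identify each multigraded strand with a relative chain complex of the pair $(\Delta_{\preceq w},\Delta_{<w})$, and then use the acyclicity of $\Delta_{\preceq w}$ (which is equivalent to $\mathcal{F}_\Delta$ being a resolution) together with the long exact sequence of the pair to pass to $\widetilde{H}_{i-1}(\Delta_{<w};k)$. There is nothing to compare against in the present paper.
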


We will use this theorem in conjunction with Taylor's resolution.

\begin{theorem}[\cite{T}]\label{taylor}
Let $I$ be a monomial ideal with $r$ minimal generators in $S$.  Then the simplex on $r$ vertices supports a free resolution of $S/I$.
\end{theorem}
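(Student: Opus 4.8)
The plan is to write down the Taylor complex explicitly and prove it is acyclic by working one multidegree at a time. Write $I=(m_1,\dots,m_r)$ with $m_1,\dots,m_r$ the minimal monomial generators, let $\Delta$ be the full simplex on the vertex set $\{1,\dots,r\}$, and for a face $\sigma\subseteq\{1,\dots,r\}$ set $m_\sigma=\operatorname{lcm}(m_i\mid i\in\sigma)$, with $m_\emptyset=1$. Labelling $\sigma$ by $m_\sigma$ produces a $\mathbb{Z}^n$-graded complex $T_\bullet$ of free $S$-modules with $T_j=\bigoplus_{|\sigma|=j}S(-\deg m_\sigma)$, whose differential is the simplicial boundary map of $\Delta$ with the monomial $m_\sigma/m_{\sigma\setminus\{i\}}$ inserted as the coefficient of the $e_\sigma\mapsto e_{\sigma\setminus\{i\}}$ term; this is a well-defined homogeneous map because $m_{\sigma\setminus\{i\}}\mid m_\sigma$, and $\partial^2=0$ is checked exactly as for the Koszul complex. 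First I would observe that the image of $T_1\to T_0$ is $(m_1,\dots,m_r)=I$, so $\operatorname{coker}(T_1\to T_0)=S/I$; hence it remains only to show $T_\bullet$ is exact in homological degrees $\geq 1$.

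Next, since $T_\bullet$ is $\mathbb{Z}^n$-graded with each $T_j$ free, I would check exactness separately in each multidegree $a$. For $a\in\mathbb{N}^n$ consider $\Delta_{\leq a}=\{\sigma\in\Delta\mid m_\sigma\mid x^a\}$; because $\tau\subseteq\sigma$ forces $m_\tau\mid m_\sigma$, this is a simplicial complex. The key step is to identify the degree-$a$ strand $(T_\bullet)_a$, augmented by $T_0\to S/I$, with the augmented reduced simplicial chain complex of $\Delta_{\leq a}$ over $k$: a size-$j$ face $\sigma$ contributes a copy of $k$ in homological degree $j$ exactly when $m_\sigma\mid x^a$, matching $\widetilde C_{j-1}(\Delta_{\leq a})$, and in degree $a$ each coefficient $m_\sigma/m_{\sigma\setminus\{i\}}$ acts as $\pm 1$, so the differentials coincide with the standard ones. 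This yields $H_j((T_\bullet)_a)\cong\widetilde H_{j-1}(\Delta_{\leq a};k)$ for all $j$.

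Then I would finish with a cone argument. If no $m_i$ divides $x^a$, then $\Delta_{\leq a}=\{\emptyset\}$, so $\widetilde H_{j-1}=0$ for $j\geq 1$ and $\widetilde H_{-1}=k=(S/I)_a$. If some $m_{i_0}$ divides $x^a$, then $m_{\sigma\cup\{i_0\}}=\operatorname{lcm}(m_\sigma,m_{i_0})\mid x^a$ for every $\sigma\in\Delta_{\leq a}$, so $\Delta_{\leq a}$ is a cone with apex $i_0$, hence contractible and $\widetilde H_*(\Delta_{\leq a};k)=0$ in all degrees (consistent with $(S/I)_a=0$, since $x^a\in I$). In every multidegree, then, $H_j((T_\bullet)_a)=0$ for $j\geq 1$ and $H_0((T_\bullet)_a)=(S/I)_a$, so $T_\bullet$ is a free resolution of $S/I$ supported on $\Delta$.

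The step I expect to require the most care is the identification in the second paragraph: verifying that the cellular differential of the labelled simplex really does restrict, multidegree by multidegree, to the simplicial boundary map of $\Delta_{\leq a}$ with the correct signs, together with keeping straight the off-by-one between a size-$j$ subset and a $(j-1)$-dimensional face in the reduced chain complex. Everything else is formal. If I wanted to avoid the cellular formalism altogether, an alternative would be induction on $r$ via the mapping cone of a comparison map between the Taylor complexes of $(I':m_r)$ and $I'=(m_1,\dots,m_{r-1})$ lifting $S/(I':m_r)\xrightarrow{\,\cdot m_r\,}S/I'$, where $I=I'+(m_r)$; the analogous nuisance there is matching the lcm-labels of the cone with those of the simplex on $r$ vertices.
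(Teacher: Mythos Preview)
The paper does not prove this theorem; it is quoted as a classical result, attributed to Taylor's thesis \cite{T}, and used as a black box. So there is no ``paper's own proof'' to compare against.

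That said, your argument is correct and is essentially the standard modern proof (as in, e.g., Miller--Sturmfels \cite{MS}): write down the labelled simplex, take the multigraded strand in degree $a$, identify it with the augmented reduced chain complex of $\Delta_{\leq a}=\{\sigma: m_\sigma\mid x^a\}$, and observe that this subcomplex is either $\{\emptyset\}$ or a cone on any vertex $i_0$ with $m_{i_0}\mid x^a$. Your bookkeeping on the degree shift (size-$j$ face $\leftrightarrow$ $(j-1)$-simplex) and on the coefficients becoming $\pm1$ in a fixed multidegree is accurate, and the cone step is exactly right since $\operatorname{lcm}(m_\sigma,m_{i_0})\mid x^a$ whenever $m_\sigma\mid x^a$ and $m_{i_0}\mid x^a$. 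The mapping-cone alternative you sketch also works and is closer in spirit to Taylor's original inductive construction; either route is acceptable here.
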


Now recall that $I(\Gamma(\alpha))$ denotes the Stanley-Reisner ideal of $\Gamma(\alpha)$.  By Theorem \ref{srideal}, 
\[
I(\Gamma(\alpha)) = (m_1, \dots, m_n,p_{n+1},\ldots,p_{n+n})
\]
where $m_i = e_{i,1}e_{i,2}\cdots e_{i,n}$ is the product of the edges incident to $x_i$ for $1\leq i \leq n$  and $p_{n+i} = e_{1,i}e_{2,i}
\cdots e_{n,i}$ for $1\leq i \leq n$ is the product of
edges incident to $y_i$.   
Let $\Delta$ be the simplex on the $2n$ vertices
$\{1,\ldots,2n\}$.  By Theorem \ref{taylor}, $\Delta$ supports a (non-minimal) free resolution of $S/I(\Gamma(\alpha))$ where $S = k[E]$.  
Label the faces of $\Delta$ in the usual way.  That is, the face $\sigma \in \Delta$ with $\sigma = \{ s_1, \dots, s_a,t_1,\ldots,t_b\}$ is labeled by the monomial 
\[
m_\sigma = \mbox{lcm}(m_{s_1}, \dots, m_{s_a},p_{t_1},\ldots,p_{t_b}).  
\]
Note that the faces of $\Delta$ naturally have the form $\sigma = \{s_1, \dots, s_a, t_1, \dots, t_b\}$ where $\{s_1, \dots, s_a\}
\subseteq \{1, \dots, n\}$ and $\{t_1, \dots, t_b\} \subseteq \{n+1, \dots, 2n\}$.  

\begin{lemma} \label{FacesDelta<w} Let $w = e_{1,1}e_{1,2}\cdots e_{n,n-1}e_{n,n}$.  The faces of $\Delta_{< w}$ are exactly those of the form $\sigma = \{s_1, \dots s_a, t_1\dots t_b\}$ with $\{s_1, \dots, s_a\} \subseteq \{1, \dots, n\}$ , $\{t_1, \dots, t_b\} \subseteq \{n+1, \dots, 2n\}$, and $1\leq a,b \leq n-1$.
In particular, the facets of $\Delta_{<w}$ are given by
\[
\sigma_{i,j} = \{1, \dots, \hat{i} \dots, n, n+1, \dots, \widehat{n+j}, \dots, 2n\}
~~\mbox{
for all $1 \leq i,j \leq n$.}\]
\end{lemma}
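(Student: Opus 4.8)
The plan is to work directly from the labelling of the Taylor simplex $\Delta$. Recall that $I(\Gamma(\alpha)) = (m_1,\dots,m_n,p_{n+1},\dots,p_{2n})$, where $m_i = e_{i,1}\cdots e_{i,n}$ and $p_{n+j} = e_{1,j}\cdots e_{n,j}$, and that a face $\sigma = \{s_1,\dots,s_a,t_1,\dots,t_b\}$ with $\{s_1,\dots,s_a\}\subseteq\{1,\dots,n\}$, $\{t_1,\dots,t_b\}\subseteq\{n+1,\dots,2n\}$ is labelled by $m_\sigma = \operatorname{lcm}(m_{s_1},\dots,m_{s_a},p_{t_1},\dots,p_{t_b})$. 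The key observation is that $w = e_{1,1}e_{1,2}\cdots e_{n,n} = \prod_{1\le i,j\le n} e_{i,j}$ is exactly the lcm of \emph{all} $2n$ generators (every edge $e_{i,j}$ divides both $m_i$ and $p_{n+j}$, and no higher power occurs). So $\sigma \in \Delta_{<w}$ iff $m_\sigma$ strictly divides $w$, i.e. iff $m_\sigma$ omits at least one variable $e_{i,j}$.

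First I would compute $m_\sigma$ explicitly in terms of which rows and columns are ``used''. If $\sigma$ uses the row-indices $A = \{s_1,\dots,s_a\}\subseteq\{1,\dots,n\}$ and the column-indices $B = \{j : n+j \in \{t_1,\dots,t_b\}\}\subseteq\{1,\dots,n\}$, then $\operatorname{lcm}$ of the corresponding $m$'s and $p$'s is precisely $\prod\{e_{i,j} : i \in A \text{ or } j \in B\}$. Hence $m_\sigma$ divides $w$ strictly exactly when the set $\{e_{i,j} : i\in A \text{ or } j\in B\}$ is a proper subset of all edges, i.e. when there exists a pair $(i,j)$ with $i\notin A$ and $j\notin B$ — equivalently, when $A \ne \{1,\dots,n\}$ \emph{and} $B \ne \{1,\dots,n\}$. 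Translating back: $\sigma \in \Delta_{<w}$ iff $a = |A| \le n-1$ \emph{and} $b = |B| \le n-1$. (The lower bound $a,b\ge 1$ is automatic once we note that the empty face has label $1$, which vacuously divides $w$ strictly when $n\ge 2$; one should check whether the statement really wants $a,b\ge 1$ or should allow $a$ or $b$ to be $0$ — I'd keep the faces with $a=0$ or $b=0$ in mind, since the empty-subset lcm is $1$, which strictly divides $w$, so strictly speaking those faces belong to $\Delta_{<w}$ too. I will flag this and, if needed, interpret the lemma as describing the \emph{positive-dimensional} structure, or simply note that adding $a=0$ or $b=0$ faces does not affect the facet description.)

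Given the characterization $\sigma\in\Delta_{<w} \iff |A|\le n-1$ and $|B|\le n-1$, the facets are obtained by maximizing: take $A$ to be any $(n-1)$-subset of $\{1,\dots,n\}$, i.e. $A = \{1,\dots,n\}\setminus\{i\}$, and likewise $B = \{1,\dots,n\}\setminus\{j\}$, giving $t$-index set $\{n+1,\dots,2n\}\setminus\{n+j\}$. This is exactly
\[
\sigma_{i,j} = \{1,\dots,\hat{i},\dots,n,\, n+1,\dots,\widehat{n+j},\dots,2n\}, \qquad 1\le i,j\le n,
\]
and every face of $\Delta_{<w}$ is contained in some $\sigma_{i,j}$ (pick any omitted row $i$ and any omitted column $j$). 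I should also verify the $\sigma_{i,j}$ are genuinely maximal in $\Delta_{<w}$, i.e. adding any vertex forces $|A| = n$ or $|B| = n$, which breaks the divisibility — this is immediate.

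The main obstacle is purely bookkeeping: carefully proving the lcm formula $m_\sigma = \prod\{e_{i,j}: i\in A \text{ or } j\in B\}$ and, in particular, being precise about what ``strictly divides'' means at the two degenerate ends ($a = 0$ or $b = 0$), so that the stated facet list is exactly right. There is no deep content — once the lcm is identified with the union of a ``row-band'' and a ``column-band'' of the $n\times n$ edge grid, everything falls out — but one must be scrupulous about the boundary cases and about the claim that a proper divisor of $w$ is characterized by omitting at least one grid edge, which uses that $w$ is squarefree and is the full lcm.
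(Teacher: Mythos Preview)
Your proposal is correct and follows essentially the same route as the paper: both hinge on the observation that $e_{i,j}$ divides exactly the two generators $m_i$ and $p_{n+j}$, so $m_\sigma$ omits $e_{i,j}$ precisely when $i\notin\sigma$ and $n+j\notin\sigma$. Your version computes the lcm $m_\sigma=\prod\{e_{i,j}:i\in A\text{ or }j\in B\}$ explicitly first and then reads off the condition, whereas the paper argues the contrapositive directly (``some $e_{i,j}$ is missing $\Rightarrow$ $i,n+j\notin\sigma$''), but these are the same argument in different clothing.

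Your flag about the lower bound $a,b\ge 1$ is well taken: the lemma statement as written does not literally include the faces with $a=0$ or $b=0$, yet those faces do lie in $\Delta_{<w}$ (their label is an lcm over fewer generators and certainly strictly divides $w$). The paper's proof tacitly ignores this, and you are right that it does not affect the facet list, since any such face is still contained in some $\sigma_{i,j}$. If you write this up, simply note that the description of facets is what is actually used downstream and is unaffected.
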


\begin{proof}
Let $\sigma$ be a face of $\Delta_{<w}$.  As noted above, $\sigma$ has the form $\sigma = \{s_1, \dots, s_a, t_1, \dots, t_b\}$ where $\{s_1, \dots, s_a\} \subseteq \{1, \dots, n\}$ and $\{t_1, \dots, t_b\} \subseteq \{n+1, \dots, 2n\}$.  It is left to prove that $a$ and $b$ are both less than or equal to $n-1$.  

Since $\sigma \in \Delta_{<w}$, the label $m_\sigma$ of the face $\sigma$ must strictly divide $w$.  Since $w$ is the product of all the variables in the ring, this is equivalent to saying that there is some variable $e_{i,j}$ which does not divide $m_\sigma$.  Since the minimal generators of $I(\Gamma(\alpha))$ are the products of the edges incident to each vertex in  $K_{n,n}$, the variable $e_{i,j}$ divides exactly two of these minimal generators, $m_i$ and $p_{n+j}$.  So $i$ and $n+j$ do not belong to $\sigma$, i.e., $a,b \leq n-1$.

Conversely, fix $i$ and $j$ such that $1\leq i,j \leq n$.  Then 
$e_{i,j}\nmid m_{\sigma_{i,j}}$, so $\sigma_{i,j} \in \Delta _{<w}$.  
\end{proof}

We will now show that the complex $\Delta_{<w}$ is shellable which will allow us to compute its homology in terms of the shelling order.

\begin{definition} 
A simplicial complex $\Delta$ is \emph{shellable} if there is an ordering of the facets $F_1, F_2, \dots, F_t$ of $\Delta$  such that for all $1\leq i < \ell \leq t$ there exists some $1\leq j <\ell$ and some $x \in F_\ell$ such that $F_i\cap F_\ell \subseteq F_j\cap F_\ell = F_\ell \setminus \{x\}$.
\end{definition}

\begin{theorem}\label{shelling}
Let $w = e_{1,1}\cdots e_{n,n}$  as in Lemma \ref{FacesDelta<w}.  Then $\Delta_{<w}$ is shellable with shelling order $\sigma_{1, n+1}, \sigma_{2, n+1}, \dots, \sigma_{n, n+1}, \sigma_{1, n+2}, \sigma_{2,n+2} \dots, \sigma_{n, n+2},\dots, \sigma_{1, 2n},\sigma_{2,2n}, \dots, \sigma_{n,2n}$.
\end{theorem}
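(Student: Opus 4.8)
The plan is to verify the shelling condition directly for the stated linear order on the facets $\sigma_{i,j}$, using the explicit description from Lemma~\ref{FacesDelta<w}. Recall each facet has the form $\sigma_{i,j} = \{1,\dots,\hat i,\dots,n\} \cup \{n+1,\dots,\widehat{n+j},\dots,2n\}$, so it is determined by the pair $(i,j) \in \{1,\dots,n\}^2$, and it is obtained from the full vertex set $\{1,\dots,2n\}$ by deleting exactly the two elements $i$ and $n+j$. In particular, for two facets $\sigma_{i,j}$ and $\sigma_{k,\ell}$ we have
\[
\sigma_{i,j} \cap \sigma_{k,\ell} = \{1,\dots,2n\} \setminus \{i,k,n+j,n+\ell\},
\]
and this intersection is a facet minus one vertex precisely when $(i,j)$ and $(k,\ell)$ differ in exactly one coordinate.

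First I would fix the order as in the statement: $\sigma_{i,j}$ precedes $\sigma_{k,\ell}$ if $j < \ell$, or if $j = \ell$ and $i < k$; equivalently, linearly order the index pairs so that the second coordinate is primary and the first is secondary. Take $\sigma_{i,j}$ and $\sigma_{k,\ell}$ with $\sigma_{i,j}$ before $\sigma_{k,\ell}$; I must produce an earlier facet $\sigma_{i',j'}$ and a vertex $v \in \sigma_{k,\ell}$ with $\sigma_{i,j} \cap \sigma_{k,\ell} \subseteq \sigma_{i',j'} \cap \sigma_{k,\ell} = \sigma_{k,\ell} \setminus \{v\}$. The natural candidate: if $j < \ell$, take $v = n+j$ (this lies in $\sigma_{k,\ell}$ since $j \neq \ell$) and take the earlier facet to be $\sigma_{k,j}$, which precedes $\sigma_{k,\ell}$ because $j < \ell$; then $\sigma_{k,j} \cap \sigma_{k,\ell} = \sigma_{k,\ell}\setminus\{n+j\}$, and since $i,k,n+j$ are all removed from $\sigma_{i,j}\cap\sigma_{k,\ell}$ while only $n+j$ is removed from $\sigma_{k,\ell}\setminus\{n+j\}$ relative to $\sigma_{k,\ell}$, the containment $\sigma_{i,j}\cap\sigma_{k,\ell} \subseteq \sigma_{k,\ell}\setminus\{n+j\}$ holds. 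If instead $j = \ell$ (so $i < k$), take $v = i \in \sigma_{k,\ell}$ (note $i \neq k$) and the earlier facet $\sigma_{i,\ell}$, which precedes $\sigma_{k,\ell}$ since $i < k$; the same bookkeeping gives $\sigma_{i,j}\cap\sigma_{k,\ell} = \sigma_{k,\ell}\setminus\{i,n+j\} \subseteq \sigma_{k,\ell}\setminus\{i\} = \sigma_{i,\ell}\cap\sigma_{k,\ell}$.

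The verification is essentially this two-case check together with confirming the minor points: that the claimed ``earlier facet'' really is earlier in the given order, that $v$ genuinely lies in $\sigma_{k,\ell}$, and that $\sigma_{k,\ell}\setminus\{v\}$ equals the stated intersection (which is immediate from the complementary description above). I do not expect a serious obstacle here; the only thing to be careful about is keeping the order convention consistent — the facets are listed column by column, i.e.\ all $\sigma_{*,n+1}$ first, then all $\sigma_{*,n+2}$, and so on — so that ``$j < \ell$ forces earlier'' is exactly right, and within a fixed column ``$i < k$ forces earlier''. Once shellability is established in this form, the homology of $\Delta_{<w}$ concentrates in the top dimension with rank equal to the number of facets whose restriction is the whole facet (the ``homology facets'' of the shelling), which will be used in the subsequent computation of $\beta_{n^2-n,w}(I(\Gamma(\alpha)))$.
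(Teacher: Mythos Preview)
Your proof is correct and follows essentially the same two-case argument as the paper: when the second indices differ you pass to the facet $\sigma_{k,j}$ and remove the vertex $n+j$, and when the second indices agree the intersection is already $\sigma_{k,\ell}\setminus\{i\}$. One small slip in your closing remark: the relevant Betti number is $\beta_{2n-2,w}(I(\Gamma(\alpha)))$, not $\beta_{n^2-n,w}$.
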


\begin{proof}
Suppose that $\sigma_{i_1,i_2}$ is earlier than $\sigma_{\ell_1,\ell_2}$ in the shelling order.  Then either $i_2<\ell_2$ or $i_2=\ell_2$ and $i_1<\ell_1$.  In the second case we have 
\[
\sigma_{i_1,i_2}\cap\sigma_{\ell_1,\ell_2} = \{1, \dots \hat{i}_1, \dots, \hat{\ell}_1, \dots, n, n+1, \dots, \widehat{n+\ell_2} \dots, 2n\} = \sigma_{\ell_1,\ell_2} \setminus\{i_1\} \, .
\]
On the other hand if $i_2<\ell_2$, then $\sigma_{\ell_1, i_2}$ comes earlier than $\sigma_{\ell_1, \ell_2}$ in the shelling order and 
\[
\sigma_{i_1,i_2}\cap\sigma_{\ell_1,\ell_2} \subseteq \sigma_{\ell_1, i_2} \cap \sigma_{\ell_1, \ell_2} = \sigma_{\ell_1,\ell_2} \setminus \{n+i_2\} \, .
\]
Therefore $\Delta_{<w}$ is shellable under the given shelling order.  
\end{proof}

\begin{corollary}\label{NonzeroBetti}
Let $G = K_{n,n}$ with $n \geq 2$.
If $\alpha = (x_1\cdots x_ny_1 \cdots y_n)^{n-1}$, then 
\[
\beta_{2n-2, w}(I(\Gamma(\alpha)) \neq 0
~~\mbox{where $w = e_{1,1}\cdots e_{n,n}$.}
\]
\end{corollary}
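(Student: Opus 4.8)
The plan is to combine the explicit description of $\Delta_{<w}$ from Lemma \ref{FacesDelta<w}, the shellability established in Theorem \ref{shelling}, and Theorem \ref{CellularResolution} applied to Taylor's resolution. Since the Taylor complex $\Delta$ (the full simplex on $2n$ vertices) supports a free resolution of $S/I(\Gamma(\alpha))$ by Theorem \ref{taylor}, Theorem \ref{CellularResolution} gives $\beta_{i,w}(I(\Gamma(\alpha))) = \dim_k \widetilde{H}_{i-1}(\Delta_{<w};k)$. So it suffices to show $\widetilde{H}_{2n-3}(\Delta_{<w};k) \neq 0$.

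First I would identify the topological type of $\Delta_{<w}$. By Lemma \ref{FacesDelta<w}, $\Delta_{<w}$ consists of all faces $\{s_1,\dots,s_a,t_1,\dots,t_b\}$ with the $s$'s in $\{1,\dots,n\}$, the $t$'s in $\{n+1,\dots,2n\}$, and $1 \leq a,b \leq n-1$. This is precisely the join of two copies of the boundary of the $(n-1)$-simplex: if $\partial\Delta^{n-1}$ denotes the simplicial complex of all proper nonempty subsets of an $n$-element set, then $\Delta_{<w} = \partial\Delta^{n-1} \star \partial\Delta^{n-1}$, with the first factor on $\{1,\dots,n\}$ and the second on $\{n+1,\dots,2n\}$. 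Since $\partial\Delta^{n-1}$ is homeomorphic to $S^{n-2}$, one gets $\widetilde{H}_k(\partial\Delta^{n-1};k) = k$ for $k = n-2$ and $0$ otherwise. Then either the reduced Künneth/join formula (Corollary \ref{kunneth1}, which holds for joins of arbitrary simplicial complexes) or a direct appeal to the shelling gives $\widetilde{H}_{(n-2)+(n-2)+1}(\Delta_{<w};k) = \widetilde{H}_{2n-3}(\Delta_{<w};k) = k \neq 0$, and all other reduced homology vanishes. Hence $\beta_{2n-2,w}(I(\Gamma(\alpha))) = \dim_k\widetilde{H}_{2n-3}(\Delta_{<w};k) = 1 \neq 0$.

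Alternatively — and this is probably the route the authors intend, given that Theorem \ref{shelling} was just proved — I would avoid the join computation and argue directly from shellability. A shellable complex of dimension $d$ is homotopy equivalent to a wedge of $d$-spheres, one for each facet $F_\ell$ in the shelling whose restriction (the minimal new face added at step $\ell$) equals $F_\ell$ itself, i.e., a \emph{homology facet}. Here $\dim \Delta_{<w} = (n-1)+(n-1)-1 = 2n-3$, matching the target homological degree $2n-3$. So I would inspect the shelling order $\sigma_{1,n+1},\dots,\sigma_{n,2n}$ and count the homology facets: from the proof of Theorem \ref{shelling}, when $\sigma_{\ell_1,\ell_2}$ is added, earlier facets $\sigma_{i_1,i_2}$ with $i_2 = \ell_2$ force $\ell_1 \in \operatorname{restriction}$ and earlier facets with $i_2 < \ell_2$ force $n+i_2 \in \operatorname{restriction}$ — so the restriction of $\sigma_{\ell_1,\ell_2}$ is $\{\ell_1\} \cup \{n+i_2 : i_2 < \ell_2\}$, which equals all of $\sigma_{\ell_1,\ell_2}$ exactly when $\ell_1 = n$ and $\ell_2 = 2n$ (so that $\{n\}\cup\{n+1,\dots,2n-1\} = \sigma_{n,2n}$). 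Thus there is exactly one homology facet, $\widetilde{H}_{2n-3}(\Delta_{<w};k) = k$, and Theorem \ref{CellularResolution} finishes the proof.

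The main obstacle is making the shelling-to-homology step rigorous and precise: one must correctly compute the restriction map $\mathcal{R}(\sigma_{\ell_1,\ell_2})$ for the given order and verify that $\sigma_{n,2n}$ is the unique facet with $\mathcal{R}(F)=F$. This is bookkeeping, but it is the crux; everything else (Taylor resolution supports a free resolution, the cellular-resolution formula, the explicit face description) is quoted from the results already established. If one prefers the join route, the only subtlety is checking that $\Delta_{<w}$ is genuinely the join $\partial\Delta^{n-1}\star\partial\Delta^{n-1}$ and that Corollary \ref{kunneth1} (stated there for the complexes $\Gamma(\alpha_i)$) applies verbatim to this purely simplicial situation — which it does, since that corollary is just the reduced Künneth formula for joins.
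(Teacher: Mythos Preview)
Your second route---via the shelling of $\Delta_{<w}$---is exactly what the paper does, only the paper is terser: it simply observes that the last facet $\sigma_{n,2n}$ meets the union of the earlier ones in its entire boundary (equivalently, its restriction is all of $\sigma_{n,2n}$), cites Bj\"orner--Wachs for the resulting nonvanishing of $\widetilde{H}_{2n-3}(\Delta_{<w};k)$, and then invokes Theorem~\ref{CellularResolution}. Your restriction bookkeeping contains a slip: the vertex $\ell_1$ is \emph{omitted} from $\sigma_{\ell_1,\ell_2}$, so it cannot lie in the restriction. From the intersections in the proof of Theorem~\ref{shelling} one gets $\mathcal{R}(\sigma_{\ell_1,\ell_2})=\{1,\dots,\ell_1-1\}\cup\{n+1,\dots,\ell_2-1\}$, which equals $\sigma_{\ell_1,\ell_2}$ precisely when $(\ell_1,\ell_2)=(n,2n)$; so your conclusion that there is a unique homology facet is correct, just with the wrong formula en route.

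Your first route---identifying $\Delta_{<w}$ with $\partial\Delta^{n-1}\star\partial\Delta^{n-1}$ and reading off $\widetilde{H}_{2n-3}=k$ from the K\"unneth formula for joins---is a genuinely different and arguably cleaner argument that the paper does not use. It makes the shelling of Theorem~\ref{shelling} unnecessary for this corollary (though one should note that the join includes faces with $a=0$ or $b=0$, which are present in $\Delta_{<w}$ as subfaces of the $\sigma_{i,j}$ even if Lemma~\ref{FacesDelta<w} does not list them explicitly). The paper's shelling approach has the advantage of being self-contained given what was already set up; your join approach trades that for a one-line homology computation.
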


\begin{proof}
First note that $\Delta_{<w}$ is pure of dimension $2n-3$.  In the shelling order of Theorem \ref{shelling}, the intersection of the last facet in the shelling order with the earlier facets is the entire boundary of the last facet.  This means that the simplicial complex $\Delta_{<w}$ has non-zero homology in dimension $2n-3$ (see Theorems 4.1 and 4.2 of \cite{BW}).  Then by Theorem \ref{CellularResolution}, $\beta_{2n-2, w}(I(\Gamma(\alpha))) \neq 0$. 
\end{proof}

To complete our proof
of Theorem \ref{Nonzerobetti1}, 
we require Hochster's formula which relates
the (multi)-graded Betti numbers of a simplicial complex $\Delta$
to the reduced simplicial homology groups of $\Delta$ and its induced
subcomplexes.

\begin{theorem}[Hochster's formula \cite{H}] \label{simplicialbetti}
 Let $\Delta$ be a simplicial complex on the vertex set $V$ and let $I(\Delta)$ 
be its Stanley-Reisner ideal.  Then 
\[
\beta_{i,j}(I(\Delta)) = 
\sum_{|W| = j, ~~W\subseteq V} \dim_k(\widetilde{H}_{j-i-2}(\Delta_W;k))
\]
where $\Delta_W$ is the restriction of $\Delta$ to the vertex set $W$.
In particular, if $|V| = m$ then
\[
 \beta_{m-i-2,m}(I(\Delta)) = \dim_k (\widetilde{H}_i(\Delta;k)).
\]
\end{theorem}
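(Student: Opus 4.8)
The plan is to deduce Hochster's formula from the standard computation of $\operatorname{Tor}$ over $S = k[x_1,\ldots,x_n]$ by the Koszul complex, together with the fact that the multigraded Betti numbers of a squarefree monomial ideal are concentrated in squarefree multidegrees. First I would pass from the ideal to the quotient ring: the short exact sequence $0 \to I(\Delta) \to S \to k[\Delta] \to 0$ with $S$ free gives $\operatorname{Tor}^S_i(I(\Delta),k) \cong \operatorname{Tor}^S_{i+1}(k[\Delta],k)$ for all $i \geq 0$, hence $\beta_{i,j}(I(\Delta)) = \beta_{i+1,j}(k[\Delta])$. So it suffices to prove $\beta_{i,j}(k[\Delta]) = \sum_{|W|=j,\, W\subseteq V}\dim_k\widetilde{H}_{j-i-1}(\Delta_W;k)$, since substituting $i+1$ for $i$ then turns $j-i-1$ into $j-i-2$ and recovers the stated formula for $I(\Delta)$.

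Next I would compute $\operatorname{Tor}^S_i(k[\Delta],k)$ with the Koszul complex $K_\bullet = K_\bullet(x_1,\ldots,x_n;S)$ on the variables, so that $\beta_{i,\mathbf a}(k[\Delta]) = \dim_k H_i(K_\bullet\otimes_S k[\Delta])_{\mathbf a}$ for each $\mathbf a\in\mathbb{N}^n$. Here $K_\bullet\otimes_S k[\Delta]$ is $\mathbb{N}^n$-graded, its $i$-th term is free over $k[\Delta]$ with basis $\{e_F : F\subseteq\{1,\dots,n\},\ |F|=i\}$, $\deg e_F = \mathbf a_F$ (the $0/1$ vector supported on $F$), and differential $\partial e_F = \sum_{v\in F}\pm x_v\,e_{F\setminus v}$. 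I would then show $H_i(K_\bullet\otimes k[\Delta])_{\mathbf a} = 0$ whenever some coordinate $a_v\geq 2$: in the strand $(K_\bullet\otimes k[\Delta])_{\mathbf a}$ every surviving basis element $x^{\mathbf a - \mathbf a_F}e_F$ then has $x_v$ dividing $x^{\mathbf a-\mathbf a_F}$, and the standard Koszul contracting homotopy in the $x_v$-direction (sending $x^{\mathbf a-\mathbf a_F}e_F$ to $\pm x^{\mathbf a - \mathbf a_F}x_v^{-1}e_{F\cup v}$ when $v\notin F$ and to $0$ otherwise) is well defined, because dividing a monomial outside the monomial ideal $I(\Delta)$ by $x_v$ keeps it outside; it satisfies $s\partial+\partial s = \pm\mathrm{id}$, so the strand is acyclic.

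The core step is the identification of the remaining strands. Fix $W\subseteq V$ and consider the strand in multidegree $\mathbf a_W$. A basis element $x^{\mathbf a_W-\mathbf a_F}e_F$ requires $F\subseteq W$, in which case $x^{\mathbf a_W-\mathbf a_F} = x_{W\setminus F}$, and it is nonzero in $k[\Delta]$ exactly when $x_{W\setminus F}\notin I(\Delta)$, i.e.\ when $\tau := W\setminus F$ is a face of $\Delta_W$. Reindexing $F\leftrightarrow\tau = W\setminus F$ identifies, up to sign, the homological degree $i$ part of this strand with the $k$-span of the faces of $\Delta_W$ of dimension $|W|-i-1$ (the case $F=W$ contributing the empty face in degree $-1$), and a short computation shows the Koszul differential $\partial$ matches the simplicial coboundary map of $\Delta_W$ under this reindexing. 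Thus the strand is the augmented (reduced) cochain complex of $\Delta_W$, regraded, so $H_i(K_\bullet\otimes k[\Delta])_{\mathbf a_W}\cong\widetilde{H}^{\,|W|-i-1}(\Delta_W;k)\cong\widetilde{H}_{|W|-i-1}(\Delta_W;k)$ (the last isomorphism holding over a field). Summing $\beta_{i,\mathbf a}(k[\Delta])$ over all $\mathbf a$ with $|\mathbf a| = j$, which by the previous step means over all $\mathbf a = \mathbf a_W$ with $|W| = j$, yields the displayed formula, hence Hochster's formula for $\beta_{i,j}(I(\Delta))$. Finally, for the "in particular" part I would set $j = m = |V|$ and replace $i$ by $m-i-2$: the only $W$ with $|W| = m$ is $W = V$, for which $\Delta_V = \Delta$, and $j-i-2$ becomes $m-(m-i-2)-2 = i$, giving $\beta_{m-i-2,m}(I(\Delta)) = \dim_k\widetilde{H}_i(\Delta;k)$.

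I expect the main obstacle to be the bookkeeping in the core step: pinning down the sign conventions so that the Koszul differential becomes exactly the simplicial coboundary, and getting the degree shift $i\leftrightarrow|W|-i-1$ right, including the bookkeeping of the empty face as the reduced/augmentation term in degree $-1$. The vanishing in non-squarefree degrees is routine once the homotopy is written down, and the reduction from $I(\Delta)$ to $k[\Delta]$ is immediate.
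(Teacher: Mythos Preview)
Your proposal is correct and follows the standard Koszul-complex proof of Hochster's formula (essentially the argument in Miller--Sturmfels or Bruns--Herzog): reduce to $k[\Delta]$ via the degree shift, compute $\operatorname{Tor}$ with the Koszul resolution of $k$, kill non-squarefree strands with the $x_v$-contracting homotopy, and identify each squarefree strand with the augmented cochain complex of the induced subcomplex. The one point worth double-checking in your homotopy step is that the cancellation $s\partial + \partial s = \pm\mathrm{id}$ survives passage to $k[\Delta]$, i.e.\ that the terms in $\partial s$ and $s\partial$ vanish in $k[\Delta]$ in exactly matching pairs; this holds because when $a_v \geq 2$, subtracting or adding $\mathbf e_v$ never changes the support of the relevant exponents, so the ``is it a face of $\Delta$'' test gives the same answer on both sides. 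Your anticipated obstacles (sign conventions, the $i \leftrightarrow |W|-i-1$ shift, the empty face) are genuine but purely bookkeeping.

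There is nothing to compare against in the paper: Theorem~\ref{simplicialbetti} is stated there as a classical result due to Hochster and cited from \cite{H} without proof, so the paper contains no argument of its own for this statement. Your write-up would serve as a self-contained proof where the paper simply invokes the literature.
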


We are now ready to prove Theorem \ref{Nonzerobetti1} using the above results.

\begin{proof}(of Theorem \ref{Nonzerobetti1})
Let $\alpha = (x_1\cdots x_ny_1 \cdots y_n)^{n-1}$ and let $w =  e_{1,1}\cdots e_{n,n}$.  By Corollary \ref{NonzeroBetti} we know that
the  $\mathbb{N}^{2n}$-graded Betti number $\beta_{2n-2, w}(I(\Gamma(\alpha)) \neq 0$, and therefore, the 
$\mathbb{N}$-graded Betti number $\beta_{2n-2, n^2}(I(\Gamma(\alpha)) \neq 0$.  Hochster's formula (Theorem \ref{simplicialbetti}) then
implies that  $\dim_k (\widetilde{H}_{n^2-2n}(\Gamma(\alpha);k)) \neq 0$.  So by Theorem \ref{toricbetti}, $\beta_{n^2-2n, \alpha}(I_{K_{n,n}}) \neq 0$.  The multidegree $\alpha$ corresponds to degree $n^2-n$ elements in the toric ideal and so $\beta_{n^2 - 2n, n^2-n}(I_{K_{n,n}}) \neq 0$.  Therefore $\mbox{reg}(I_{K_{n,n}}) \geq n^2-n-(n^2-2n) = n$.
\end{proof}

\begin{example}  We illustrate some of the ideas of this section for the 
graph $G = K_{3,3}$.  Let $\alpha = (x_1x_2x_3y_1y_2y_3)^2$.  By Theorem \ref{srideal}, the Stanley-Reisner ideal of $\Gamma(\alpha)$ is 
\[I(\Gamma(\alpha)) = (e_{1,1}e_{1,2}e_{1,3},
e_{2,1}e_{2,2}e_{2,3},e_{3,1}e_{3,2}e_{3,3},
e_{1,1}e_{2,1}e_{3,1},e_{1,2}e_{2,2}e_{3,2},e_{1,3}e_{2,3}e_{3,3}
) \]
in the ring $k[E] =k[e_{i,j}~|~ 1 \leq i,j \leq 3]$.  The Betti table of $I(\Gamma(\alpha))$ is 
\[\begin{tabular}{c|ccccc}
&0&1&2&3&4\\
\hline
0&-&-&-&-&-\\
1&-&-&-&-&-\\
2&-&-&-&-&-\\
3&6&-&-&-&-\\ 
4&-&9&-&-&-\\
5&-&6&18&9&1\\
\end{tabular}.\]
We can see that the Betti number $\beta_{4,9}(I(\Gamma(\alpha))) \neq 0$ which means that $\dim_k(\widetilde H_3(\Gamma(\alpha);k)) \neq 0 $.  
By Theorem \ref{toricbetti}, this means that
$\beta_{3,\alpha}(I_{K_{3,3}}) \neq 0$, which implies that
$\beta_{3,6}(I_{K_{3,3}}) \neq 0$.  Indeed, the Betti
table of $I_{K_{3,3}}$ is
\[\begin{tabular}{c|cccc}
&0&1&2&3\\
\hline
0&-&-&-&-\\
1&-&-&-&-\\
2&9&16&9&-\\
3&-&-&-&1\\
\end{tabular}.\]
In this example, we see that ${\rm reg}(I_{K_{3,3}}) = 3$.  In fact,
as we will show in the next section, the bound ${\rm reg}(I_{K_{n,n}})
\geq n$ is actually an equality.

\end{example}


\section{Upper bounds for chordal bipartite graphs}

In this section we present an upper bound on the regularity of toric ideals associated to a special class of bipartite graphs.

\begin{definition}
A graph is called \emph{chordal bipartite} if $G$ is a bipartite graph and $G$ has no induced subgraphs which are cycles of length six or larger.  
\end{definition}

The motivation behind restricting to this class of graphs is the following theorem by Ohsugi and Hibi.

\begin{theorem}[\cite{OH1,OH2}] \label{chordal bipartite} Let $G$ be a bipartite graph. Then the following are equivalent:
	\begin{enumerate}[(i)]
	\item $G$ is a chordal bipartite graph.
	\item The toric ideal $I_G$ has a Gr\"obner basis consisting of quadratic binomials.
	\end{enumerate}
\end{theorem}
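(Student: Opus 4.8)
The plan is to prove the two implications separately; (ii)$\Rightarrow$(i) is short, while (i)$\Rightarrow$(ii) is the substantive half.

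For (ii)$\Rightarrow$(i) I would prove the contrapositive: an induced cycle of length $\ge 6$ produces a minimal generator of $I_G$ of degree $\ge 3$, whence $I_G$ is not generated by quadratic binomials and so has no Gröbner basis consisting of such. Suppose $C$ is an induced $2k$-cycle of $G$ with $k\ge 3$. Being connected and bipartite, $C$ has incidence matrix of rank $2k-1$, so $I_C$ is a height-one prime in the polynomial ring on the $2k$ edges of $C$, hence principal (that ring is a UFD); its generator is the degree-$k$ binomial $\prod_{e\in M_1}e-\prod_{e\in M_2}e$, where $M_1,M_2$ are the two perfect matchings of $C$. Taking $\alpha=\prod_{x\in V(C)}x$, the fact that $C$ is induced forces every monomial of $k[E]$ mapping to $\alpha$ to use edges of $C$ only, so $\Gamma(\alpha)$ coincides with the complex computed inside $C$, namely a disjoint union of two simplices; thus $\widetilde H_0(\Gamma(\alpha);k)\ne 0$, and Theorem \ref{toricbetti} gives $\beta_{0,\alpha}(I_G)\ne 0$.

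For (i)$\Rightarrow$(ii) the plan is to write down a quadratic Gröbner basis explicitly. First I would use the structural fact that a chordal bipartite graph admits linear orders $x_1,\dots,x_n$, $y_1,\dots,y_m$ of its colour classes for which the bipartite adjacency matrix has no ``bad'' $2\times 2$ pattern --- concretely, whenever $e_{ij},e_{il},e_{kj}\in E$ with $i<k$ and $j<l$, the edge $e_{kl}$ is in $E$ as well (so $\{x_i,x_k,y_j,y_l\}$ spans a $4$-cycle). Equip $k[E]$ with the lexicographic order given by $e_{ij}>e_{kl}$ iff $i<k$, or $i=k$ and $j<l$; then each $4$-cycle binomial $g_{i,k,j,l}=e_{ij}e_{kl}-e_{il}e_{kj}$ (with $i<k$, $j<l$, and $\{x_i,x_k,y_j,y_l\}$ spanning a $4$-cycle of $G$) has leading term $e_{ij}e_{kl}$. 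The claim is that $\mathcal G=\{g_{i,k,j,l}\}$ is a Gröbner basis of $I_G$; since the monomial basis of the edge ring $k[E]/I_G$ is indexed by the $\pi$-fibres, this amounts to saying that each $\pi$-fibre contains exactly one monomial divisible by no element of ${\rm in}(\mathcal G)$. I would argue this by contradiction: take two distinct, $\pi$-equivalent such ``sorted'' monomials $m,m'$ of least total degree; a cancellation argument (a divisor of a sorted monomial is sorted) gives $\gcd(m,m')=1$, so $m\uplus m'$ is a bipartite multigraph with all even degrees, hence decomposes into a closed walk alternating between edges of $m$ and of $m'$; walking to an edge of $m$ of smallest $y$-index (ties broken suitably) and invoking the pattern-avoidance of the ordering yields two edges of $m$ that form an inversion completing to a $4$-cycle of $G$, contradicting the sortedness of $m$. (Alternatively one could induct on $|V(G)|$ via a vertex whose deletion preserves chordal bipartiteness, or verify Sturmfels' sortability criterion for the monomial set $\{x_iy_j:e_{ij}\in E\}$.)

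The hard part, in any route, is the combinatorial heart of (i)$\Rightarrow$(ii): turning ``$G$ has no induced cycle of length $\ge 6$'' into ``the $4$-cycle binomials reduce every S-pair to zero.'' The delicate point is that once a long alternating closed walk appears, one must use a chord --- guaranteed by chordal bipartiteness --- to extract a genuine $4$-cycle of $G$ whose binomial is oriented compatibly with the term order, while keeping enough structure for the induction to proceed. Identifying the vertex ordering that makes this bookkeeping go through is exactly the substance of Ohsugi and Hibi's theorem, and is where I expect the real effort to lie.
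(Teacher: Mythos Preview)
The paper does not prove this theorem; it is quoted from Ohsugi--Hibi \cite{OH1,OH2}. What the paper does is record, for later use, the specific term order and the shape of the resulting Gr\"obner basis (Construction~\ref{gb}) together with the key structural consequence that the ordered bipartite adjacency matrix avoids the pattern $\begin{bmatrix}1&1\\1&0\end{bmatrix}$ (Lemma~\ref{submatrixlemma}). So there is no ``paper's own proof'' to compare against beyond this description of the Ohsugi--Hibi machinery.

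Your sketch is sound and tracks the Ohsugi--Hibi strategy closely. Your (ii)$\Rightarrow$(i) via $\Gamma(\alpha)$ for $\alpha=\prod_{x\in V(C)}x$ on an induced $2k$-cycle $C$ is correct and pleasantly in the spirit of the paper: since $C$ is induced, the fibre $C_\alpha$ consists exactly of the two perfect matchings of $C$, so $\Gamma(\alpha)$ is two disjoint simplices and Theorem~\ref{toricbetti} yields a degree-$k$ minimal generator. For (i)$\Rightarrow$(ii) you correctly isolate the two ingredients the paper records: the good vertex ordering making $A_G$ $\Gamma$-free, and the $4$-cycle binomials as the candidate Gr\"obner basis. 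One discrepancy worth flagging: Ohsugi--Hibi (and the paper, in Construction~\ref{gb}) use reverse lex with $e_{1,1}<\cdots<e_{n,m}$, which makes the \emph{anti-diagonal} $e_{a,d}e_{c,b}$ the leading term; you instead propose lex with the \emph{main diagonal} $e_{i,j}e_{k,l}$ leading. The specific $\Gamma$-pattern avoided in Lemma~\ref{submatrixlemma} is the one tailored to the anti-diagonal choice, so if you pursue your lex/main-diagonal route you would need to check which of the four $\Gamma$-patterns your ordering must forbid for the ``unique sorted monomial per fibre'' argument to close, and state the vertex-ordering lemma accordingly. This is a bookkeeping calibration rather than a gap in the overall plan.
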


To prove Theorem \ref{chordal bipartite}, 
Ohsugi and Hibi \cite{OH1,OH2} give an explicit description of the monomial ordering which gives rise to a quadratic Gr\"obner basis for $I_G$.  As this ordering will be important in our work, we take the time to define it here.

\begin{construction}[\cite{OH2}]\label{gb} 
For a chordal bipartite graph $G = (V,E)$ with bipartition
$V = V_1 \cup V_2$, define $A_G$ to be the matrix with columns indexed by $V_2= \{y_1, \dots, y_m\}$ and rows indexed by $V_1= \{x_1,\dots,x_n\}$ and the $(i,j)^\text{th}$ entry  given by 
	\[
	(A_G)_{i,j}=\left\{\begin{array}{ll} 1 & \text{if}~\{x_i,y_j\}\in E\\
	0 & \text{otherwise.}\end{array}\right.
	\]
The vertices in $V_1$ and $V_2$ can be relabeled so that the rows and columns of $A_G$ are 
largest to smallest in the reverse lexicographic order from left to right and from top to bottom (for details see \cite{OH2}). 
Using this relabeling,
let $e_{i,j}$ denote the edge $\{x_i, y_j\}$ and order the
variables of $K[E]$ as follows: $e_{1,1}< e_{1,2}< \dots< e_{1,m}< e_{2,1}< \dots< e_{2,m}<\dots< e_{n,m}$.  Using the reverse lexicographic order with the variables in this order gives a quadratic Gr\"obner basis of $I_G$ in $k[E]$.  
\end{construction}

Embedded in Oshugi and Hibi's proof of Theorem \ref{chordal bipartite} is the following fact which 
we record as a lemma.

\begin{lemma}\label{submatrixlemma}
Let $G$ be a chordal 
bipartite graph.  The matrix $A_G$ as ordered in Construction \ref{gb} has
no induced submatrix of form $\begin{bmatrix} 1& 1 \\ 1 & 0 \end{bmatrix}$.
\end{lemma}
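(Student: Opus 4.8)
The plan is to argue by contradiction using the explicit ordering from Construction \ref{gb} together with the chordal bipartite hypothesis. Suppose $A_G$, ordered as in Construction \ref{gb}, has an induced submatrix of the form $\begin{bmatrix} 1 & 1 \\ 1 & 0 \end{bmatrix}$ sitting in rows $x_i, x_k$ (with $i < k$) and columns $y_j, y_\ell$ (with $j < \ell$); so $\{x_i,y_j\}, \{x_i,y_\ell\}, \{x_k,y_j\} \in E$ but $\{x_k,y_\ell\} \notin E$. The idea is to use the reverse lexicographic ordering of the rows and columns of $A_G$ to locate a fourth vertex that, combined with these four, either produces an induced cycle of length $\geq 6$ or forces a contradiction with the way the rows/columns were sorted.

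First I would recall precisely what the sorting in Construction \ref{gb} gives us: the rows of $A_G$, read as $0/1$ vectors indexed by the (already sorted) columns, are in weakly decreasing reverse-lexicographic order from top to bottom, and similarly for columns. The key consequence to extract is a comparison between rows $x_i$ and $x_k$: since row $x_i$ precedes row $x_k$ and row $x_k$ has a $1$ in column $y_j$ while row $x_i$ has a $0$ in column $y_\ell$ with $\ell > j$, I would compare the two rows restricted to columns $\{y_1, \dots, y_\ell\}$ or a suitable initial segment, and deduce there must be some column $y_p$ with $p$ in an appropriate position where row $x_i$ has a $1$ and row $x_k$ has a $0$ (this is what ``row $x_i$ is reverse-lex larger'' buys us, given the $0$ at $(i,\ell)$ versus the $1$ at $(k,j)$). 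Symmetrically, comparing columns $y_j$ and $y_\ell$ should produce a row $x_q$ distinguishing them the other way.

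Next I would assemble these new vertices with the original four to build an induced even cycle. The edges $\{x_i, y_\ell\}, \{x_i, y_j\}, \{x_k, y_j\}$ together with the non-edge $\{x_k, y_\ell\}$ form a path $y_\ell - x_i - y_j - x_k$ that we want to close up into a cycle avoiding the chord; the extra vertex coming from the row/column comparison should provide the missing link $x_k - y_\ell$ via a detour of length $2$ or $4$ through new vertices, and the sortedness conditions are exactly what forbid the chords that would shortcut this detour back down to a $4$-cycle. Since $G$ is bipartite any cycle is even, and once it has length $\geq 6$ and is induced we contradict chordal bipartiteness.

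The main obstacle I anticipate is the bookkeeping in the middle step: turning the abstract statement ``row $x_i$ is reverse-lex larger than row $x_k$, yet $x_k$ beats $x_i$ at column $y_j$'' into the precise location of a usable distinguishing column, and then verifying that the cycle one builds is genuinely \emph{induced} (no chords) rather than merely closed — this is where the ``no induced $C_{2t}$ for $t \geq 3$'' hypothesis has to be invoked carefully, possibly by taking a shortest such cycle and arguing any chord would either create a shorter forbidden cycle or re-create the offending $\begin{bmatrix} 1 & 1 \\ 1 & 0 \end{bmatrix}$ submatrix in a position contradicting the sorting. I would expect the cleanest route is to take a \emph{minimal} counterexample (minimal, say, in $\ell - j$ or in the number of columns strictly between $y_j$ and $y_\ell$) and show the minimality already forces the needed induced structure, rather than constructing the cycle by hand in full generality.
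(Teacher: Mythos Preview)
The paper does not prove this lemma; it simply records it as a fact ``embedded in Ohsugi and Hibi's proof'' of Theorem~\ref{chordal bipartite} and cites \cite{OH2}. So there is no argument in the paper to compare against---you are attempting a proof where the authors just appeal to the literature.

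Your overall strategy is the right one and matches the standard matrix-theoretic proof: assume a $\Gamma$-submatrix, use the doubly lexical (reverse-lexicographic) row/column ordering to locate further entries, and assemble an induced even cycle of length $\geq 6$, contradicting chordal bipartiteness. This is precisely the classical characterization of chordal bipartite graphs via $\Gamma$-free doubly lexical orderings (Hoffman--Kolen--Sakarovitch, Anstee--Farber, Lubiw), and it is what underlies Ohsugi and Hibi's argument.

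That said, your sketch has a slip and underestimates where the work lies. You write that ``row $x_i$ has a $0$ in column $y_\ell$,'' but one line earlier you recorded $\{x_i,y_\ell\}\in E$, so $(A_G)_{i,\ell}=1$. Once this is corrected, observe that on columns $j,\ell$ alone row $x_i$ reads $(1,1)$ and row $x_k$ reads $(1,0)$, and on rows $i,k$ alone column $y_j$ reads $(1,1)$ and column $y_\ell$ reads $(1,0)$---both already compatible with the ordering. So a single row or column comparison yields no new witness; the $\Gamma$ pattern is locally consistent with doubly lexical order. The genuine content is an \emph{iterative} construction: from the $0$ at $(k,\ell)$ one repeatedly invokes the ordering to force a staircase of $1$'s and $0$'s that eventually closes into an induced cycle. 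What you flag at the end as ``bookkeeping'' is therefore the entire proof, not a detail, and the minimal-counterexample device you propose is indeed the standard way to make the induction terminate cleanly. As written, you have named the right ingredients but not yet carried out the construction.
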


Ohsugi and Hibi go on to show that the monomial order of Construction \ref{gb} gives a Gr\"obner basis whose elements are the binomials of the form $e_{a,d}e_{b,c}-e_{a,b}e_{c,d}$ where $a<c$, $b<d$ and the submatrix of $A_G$ given by rows $a$ and $c$ and columns $b$ and $d$ is 
$\begin{bmatrix} 1 & 1 \\ 1 & 1 \end{bmatrix}$.
Furthermore, the initial ideal of $I_G$ under this order is:
\[
{\rm in}_<(I_G) = \left\langle e_{a,d}e_{b,c} \left|
\begin{tabular}{c}
$a<c$, $b<d$, \mbox{ the submatrix of $A_G$ given by rows}\\ 
\mbox{$a$ and $c$ and columns $b$ and $d$ is
$\begin{bmatrix}
1 & 1 \\ 1 & 1 
\end{bmatrix}$}
\end{tabular}
\right\rangle\right..
\]
In other words, ${\rm in}_<(I_G)$ can be determined directly
from  $A_G$ by identifying all submatrices of the  form
 $\begin{bmatrix} 1 & 1 \\ 1 & 1 \end{bmatrix}$,
 and then the indices corresponding to the anti-diagonal of this matrix give us a generator
 of ${\rm in}_<(I_G)$.  Note that ${\rm in}_<(I_G)$ is a quadratic square-free
 monomial ideal, so we can also view it as the edge ideal of some graph.  We formalize
 this idea:
 
 \begin{construction}\label{graphconstruction}
 Let $G = (V_G,E_G)$ be a chordal bipartite graph, and let $A_G$ be constructed
 as in Construction \ref{gb}.  Let $H = (V_H,E_H)$ be the graph with the vertex set
 $V_H = \{e_{i,j} \mid (A_G)_{i,j} = 1 \}$ and edge set 
\[E_H = 
 \left\{\{e_{a,d},e_{b,c}\} ~\left|~  
\begin{array}{c}
a<c, b<d, \mbox{ the submatrix of $A_G$ given by rows} \\
\mbox{$a$ and $c$ and columns $b$ and $d$ is } 
\begin{bmatrix} 1 & 1 \\ 1 & 1 \end{bmatrix} \\
\end{array}
\right\}\right..
\]
Equivalently, let $H$ be the graph with edge ideal $I(H) = {\rm in}_<(I_G)$.
\end{construction}

\begin{example}\label{positionex}
We illustrate the above ideas with the chordal
bipartite graph $G$:
\[\begin{tikzpicture}
          \vertex (x_1) at (1,2)  [label=above:$x_1$] {};
          \vertex (x_2) at (3,2)   [label=above:$x_2$] {};
          \vertex (x_3) at (5,2)  [label=above:$x_3$] {};
          \vertex (x_4) at (7,2)  [label=above:$x_4$] {};
          \vertex (y_1) at (0,0)  [label=below:$y_1$] {};
          \vertex (y_2) at (2,0)  [label=below:$y_2$] {};
          \vertex (y_3) at (4,0)  [label=below:$y_3$] {};
          \vertex (y_4) at (6,0)  [label=below:$y_4$] {};
          \vertex (y_5) at (8,0)  [label=below:$y_5$] {};
	\path
		(x_1) edge (y_1)
                (x_1) edge (y_2)
                (x_1) edge (y_3)
                (x_2) edge (y_1)
                (x_2) edge (y_2)
                (x_2) edge (y_3)
                (x_3) edge (y_2)
                (x_3) edge (y_3)
                (x_3) edge (y_4)
                (x_3) edge (y_5)
	        (x_4) edge (y_2)
                (x_4) edge (y_3)
                (x_4) edge (y_4)
                (x_4) edge (y_5) 
        ;   
\end{tikzpicture}\]
The vertex set is $V = \{x_1,x_2,x_3,x_4,y_1,y_2,y_3,y_4,y_5\}$.  For this graph
$G$, the matrix $A_G$ is 
\[A_ G = \begin{bmatrix}
1 & 1 & 1 & 0 & 0\\
1 & 1 & 1 & 0 & 0 \\
0 & 1 & 1 & 1 & 1 \\
0 & 1 & 1 & 1 & 1 
\end{bmatrix}.\]
Note that under this labeling of the vertices, the rows (respectively
the columns), are ordered from largest to smallest with respect
to the reverse lexicographical order from top to bottom
(respectively from left to right).

We construct the graph $H$ from $A_G$ as follows.  
Replace each 1 by a vertex (keeping the matrix like structure, 
i.e., vertex $e_{i,j}$ is position $(i,j)$) and remove all the zeroes.  
For every $2 \times 2$ submatrix consisting of only ones in $A_G$, we join the two
vertices corresponding to the anti-diagonal.  So, in our example,
our graph $H$ has the form:
\[\begin{tikzpicture}
          \vertex (e11) at (0,3)  [label=above:$e_{1,1}$] {};
          \vertex (e12) at (1,3)  [label=above:$e_{1,2}$] {};
          \vertex (e13) at (2,3)  [label=above:$e_{1,3}$] {};
          
          \vertex (e21) at (0,2)  [label=below:$e_{2,1}$] {};
          \vertex (e22) at (1,2)  [label=below:$e_{2,2}$] {};
          \vertex (e23) at (2,2)  [label=right:$e_{2,3}$] {};

          \vertex (e32) at (1,1)  [label=left:$e_{3,2}$] {};
          \vertex (e33) at (2,1)  [label=above:$e_{3,3}$] {};
          \vertex (e34) at (3,1)  [label=above:$e_{3,4}$] {};
          \vertex (e35) at (4,1)  [label=above:$e_{3,5}$] {};

          \vertex (e42) at (1,0)  [label=below:$e_{4,2}$] {};
          \vertex (e43) at (2,0)  [label=below:$e_{4,3}$] {};
          \vertex (e44) at (3,0)  [label=below:$e_{4,4}$] {};
          \vertex (e45) at (4,0)  [label=below:$e_{4,5}$] {};
	\path
		(e12) edge (e21)
                (e13) edge (e21)
                (e13) edge (e22)
                (e13) edge (e32)
                (e13) edge (e42)
                (e23) edge (e32)
                (e23) edge (e42)
                (e33) edge (e42)
                (e34) edge (e42)
                (e34) edge (e43)
                (e35) edge (e42)
                (e35) edge (e43)
                (e35) edge (e44)
        ;   
\end{tikzpicture}\]
\end{example}

Note that if we draw the graph $H$ using the matrix $A_G$ as we did in the above example,
we can view every edge in $H$ has 
having an {\it upper-right endpoint} and a 
{\it lower-left endpoint}.
We will use this terminology in the proof below.

Because $I(H) = {\rm in}_<(I_G)$,
if we can bound ${\rm reg}(I(H))$ then a bound on ${\rm reg}(I_G)$ will follow
from  Theorem \ref{initialidealbound}.  To bound ${\rm reg}(I(H))$ we require
a result of Woodroofe bounding the regularity of an edge ideal in terms of co-chordal subgraphs of the graph. 

\begin{definition}
A graph is called \emph{chordal} if it has no induced subgraphs which are cycles of length greater than three.  A graph is called \emph{co-chordal} if its complement is chordal. 
A \emph{co-chordal cover} of a graph $H$ is a set of co-chordal subgraphs $H_1, \dots, H_t$ of $H$ such that $E_H = \bigcup_{i=1}^tE_{H_i}$.  The \emph{co-chordal cover number} of $H$, denoted cochord($H$), is the smallest size of a co-chordal cover of $H$.
\end{definition}

\begin{theorem}[\cite{W}]\label{co-chordal}
Given a graph $H$ with edge ideal $I(H)$, we have
\[
{\rm reg}(I(H)) \leq {\rm cochord}(H) + 1 .
\]
\end{theorem}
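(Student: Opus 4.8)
The plan is to write $I(H)$ as a sum of edge ideals of co-chordal graphs, to bound the regularity of each summand by Fr\"oberg's theorem, and then to bound the regularity of the sum using a subadditivity property of squarefree monomial ideals. Write $R$ for the polynomial ring on the vertices of $H$, so that $I(H)\subseteq R$, and recall the standard identity ${\rm reg}(I(H)) = {\rm reg}(R/I(H))+1$; we may assume $H$ has at least one edge, as the claim is otherwise trivial. The two inputs I would use are the following. \emph{Fr\"oberg's theorem}: a graph $K$ is co-chordal (that is, $\overline{K}$ is chordal) if and only if $I(K)$ has a linear resolution, in which case $I(K)$ is generated in degree $2$ with linear syzygies, and hence ${\rm reg}(R/I(K)) = 1$ whenever $K$ has an edge. \emph{The Kalai--Meshulam inequality}: if $I$ and $J$ are squarefree monomial ideals of a polynomial ring, then ${\rm reg}(R/(I+J)) \le {\rm reg}(R/I) + {\rm reg}(R/J)$.

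Granting these, the argument is short. Fix a minimum co-chordal cover $H_1,\dots,H_t$ of $H$, so $t = {\rm cochord}(H)$ and $E_H = E_{H_1}\cup\cdots\cup E_{H_t}$. A squarefree quadratic monomial $x_ix_j$ belongs to $I(H)$ precisely when $\{x_i,x_j\}\in E_H$, i.e.\ when $\{x_i,x_j\}$ is an edge of some $H_\ell$, so that
\[
I(H) = I(H_1) + I(H_2) + \cdots + I(H_t).
\]
Every partial sum $I(H_1)+\cdots+I(H_\ell)$ is again squarefree (it is the edge ideal of $H_1\cup\cdots\cup H_\ell$), so the Kalai--Meshulam inequality applies at each stage; iterating it gives
\[
{\rm reg}(R/I(H)) \le \sum_{\ell=1}^{t} {\rm reg}(R/I(H_\ell)).
\]
Since each $H_\ell$ is co-chordal and (in a minimum cover) has an edge, Fr\"oberg's theorem gives ${\rm reg}(R/I(H_\ell)) = 1$ for every $\ell$, so ${\rm reg}(R/I(H)) \le t$, and therefore ${\rm reg}(I(H)) = {\rm reg}(R/I(H))+1 \le t+1 = {\rm cochord}(H)+1$.

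The genuinely nontrivial point is the Kalai--Meshulam inequality, which is where I expect the real work to lie: via Hochster's formula one identifies ${\rm reg}(R/I_\Delta)$ with the Leray number of the Stanley--Reisner complex $\Delta$, observes that $I_{\Delta_1}+I_{\Delta_2} = I_{\Delta_1\cap\Delta_2}$, and proves that Leray numbers are subadditive under intersection of complexes --- a homological induction on the number of vertices, carried out simultaneously over all induced subcomplexes. Everything else in the proof (Fr\"oberg's characterization, the relation between ${\rm reg}(I)$ and ${\rm reg}(R/I)$, and the decomposition of $I(H)$ forced by the cover) is routine bookkeeping.
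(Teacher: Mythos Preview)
Your argument is correct and is essentially Woodroofe's original proof: decompose $I(H)$ as a sum of edge ideals of the covering co-chordal pieces, apply Fr\"oberg's theorem to each summand, and bound the regularity of the sum via the Kalai--Meshulam/Herzog subadditivity inequality for squarefree monomial ideals. The paper itself does not reproduce a proof of this statement but simply cites \cite{W}, so there is nothing further to compare against here.
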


We now come to the main result of this section.

\begin{theorem}\label{secondtheorem}
Let $G$ be a chordal bipartite graph with bipartition $V = \{x_1,\ldots,
x_n\} \cup \{y_1,\ldots,y_m\}$.  Let 
$r = |\{x_i ~|~ \deg x_i = 1\}|$ and $s = |\{y_j ~|~ \deg y_j = 1\}|$.  
Then
\[{\rm reg}(I_G) \leq \min\{n-r,m-s\}.\]
\end{theorem}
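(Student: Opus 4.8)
The plan is to bound $\operatorname{reg}(I_G)$ by passing to the initial ideal $\operatorname{in}_<(I_G) = I(H)$ via Theorem \ref{initialidealbound}, then bounding $\operatorname{reg}(I(H))$ using Woodroofe's bound (Theorem \ref{co-chordal}). Thus it suffices to exhibit a co-chordal cover of $H$ of size at most $\min\{n-r,m-s\} - 1$. By symmetry (transposing $A_G$ swaps the roles of the two sides of the bipartition), it is enough to produce a co-chordal cover of size $n - r - 1$; I will do this by grouping the edges of $H$ according to the rows of $A_G$ in which their lower-left endpoint lies.

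The key construction: for each row index $i$ of $A_G$, let $H_i$ be the subgraph of $H$ consisting of all edges $\{e_{a,d}, e_{b,c}\}$ of $H$ whose lower-left endpoint $e_{c,b}$ sits in row $c = i$ (recall each edge of $H$ corresponds to a $2\times2$ all-ones submatrix with rows $a < c$ and columns $b < d$, and the anti-diagonal edge joins $e_{a,d}$ to $e_{b,c}$ — so "lower-left endpoint" means the vertex $e_{c,b}$ with the larger row index). These $H_i$ clearly cover $E_H$. First I would observe that any row $i$ that is the smallest row index of $A_G$, or more importantly any $x_i$ with $\deg x_i = 1$, contributes no edges: if $x_i$ has degree one, row $i$ has a single $1$, so it cannot be the bottom row of a $2\times 2$ all-ones submatrix, hence $H_i$ is empty. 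Likewise the topmost row can never serve as the lower row of such a submatrix. That already discards $r + 1$ of the $n$ row-indices (one needs a small argument that the degree-one rows and the top row are genuinely distinct cases, or simply that at least $r+1$ of the $H_i$ are empty), leaving at most $n - r - 1$ nonempty subgraphs in the cover.

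The heart of the argument is showing each $H_i$ is co-chordal, i.e. that the complement of $H_i$ (on its own vertex set, or as viewed inside $H$ — one must be slightly careful which) is chordal. Here is where Lemma \ref{submatrixlemma} enters: because $A_G$ has no induced $\begin{bmatrix} 1 & 1 \\ 1 & 0 \end{bmatrix}$, the set of columns having a $1$ in row $i$ is an "interval-like" / nested structure relative to the rows above, and two vertices $e_{a,d}, e_{a',d'}$ lying strictly above row $i$ are joined in $H_i$ precisely when their column sets interact in a controlled way with the support of row $i$. I expect one can show that $H_i$ is in fact a complete multipartite graph, or a disjoint union of such, or more generally that its complement is chordal because it is built from comparability/threshold-type relations forced by the forbidden submatrix. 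Concretely, I would try to prove that within $H_i$, adjacency of $e_{a,d}$ and $e_{b,c}$ depends only on the pair of columns $\{b,d\}$ and the fact that both $(a,b),(a,d),(c,b),(c,d)$ are $1$ with $c = i$; using Lemma \ref{submatrixlemma} to pin down exactly which $2\times 2$ blocks with bottom row $i$ are all-ones, and then checking the complement has no induced cycle of length $\geq 4$ by a direct case analysis on a putative such cycle.

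The main obstacle will be precisely this last step — verifying co-chordality of $H_i$ — since it requires extracting real combinatorial force from the single forbidden submatrix of Lemma \ref{submatrixlemma} and translating it into the chordality of a complement graph. A secondary subtlety is bookkeeping: making sure the "lower-left endpoint" assignment is well-defined for every edge of $H$ (it is, since $a < c$ forces a unique choice), that the empty-$H_i$ count is exactly right so the cover has size $\le n - r - 1$, and that Woodroofe's "$+1$" then lands us at $\min\{n-r,m-s\}$ rather than one more. If the direct case analysis for co-chordality proves unwieldy, a fallback is to identify $H_i$ explicitly with a well-known co-chordal family (e.g. the complement of a chordal graph such as an interval graph) by exploiting the interval structure of each row's support, which Lemma \ref{submatrixlemma} should guarantee.
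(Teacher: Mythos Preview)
Your approach is essentially the paper's: pass to $\operatorname{in}_<(I_G)=I(H)$ and then apply Woodroofe's bound via a co-chordal cover of $H$ indexed by rows of $A_G$. The only cosmetic differences are that the paper groups edges of $H$ by the row of the \emph{upper-right} endpoint (so $i$ ranges over $1,\dots,n-1$) rather than the lower-left, and it handles degree-one vertices more cleanly by simply deleting them at the outset---this does not change $I_G$, and it sidesteps entirely the bookkeeping worry you flag about whether the top row and the $r$ degree-one rows might coincide.

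The genuine gap is in your co-chordality sketch. Your sentence ``two vertices $e_{a,d}$, $e_{a',d'}$ lying strictly above row $i$ are joined in $H_i$ precisely when\dots'' is false: by your own definition every edge of $H_i$ has exactly one endpoint in row $i$ (its lower-left endpoint), so two vertices both strictly above row $i$ are \emph{never} adjacent in $H_i$. This is in fact the key structural observation you are missing. It says $H_i$ is bipartite with parts $V_1=\{e_{i,b}\}$ and $V_2=\{e_{a,d}:a<i\}$; hence in $H_i^c$ each part becomes a clique, so any induced cycle of length $\ge 5$ in $H_i^c$ has three vertices in one part and automatically has a chord. An induced $4$-cycle in $H_i^c$ would have exactly two vertices in each part, and then a one-line check using Lemma~\ref{submatrixlemma} (three known $1$'s in a $2\times 2$ submatrix of $A_G$ force the fourth) produces an edge of $H_i$ exactly where the putative $4$-cycle forbids one. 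So the ``main obstacle'' you anticipate is short once the bipartite structure of $H_i$ is recognized; no identification with interval or threshold graphs is needed.
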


\begin{proof}
If $G'$ denotes the graph obtained from $G$ by removing all 
the vertices of degree one, then it follows that 
the toric ideals associated to $G'$ and $G$ are the same.
So, without loss of generality, we can
assume that $G$ has no vertices of degree one, and that
$n \leq m$.  

Construct $H$ from $G$ as in Construction
\ref{graphconstruction}.   Because ${\rm in}_<(I_G) = I(H)$, 
to prove the bound ${\rm reg}(I_G) \leq n$, by Theorem 
\ref{initialidealbound} it suffices
to show
that ${\rm reg}(I(H)) \leq n$.
To achieve this goal, we will make use use of 
Theorem \ref{co-chordal}.  Consequently, it suffices
to produce a set $n-1$ co-chordal subgraphs of $H$ which cover $H$.

For each $1\leq i \leq n-1$, define $H_i$ to be the subgraph
of $H$ with vertex set
\[V_{H_i} = \{ e_{a,b} ~|~ e_{a,b} \in V_H ~\mbox{and}~~ a \geq i\}\]
and edge set
\[
E_{H_i} = \{\{e_{i,j},e_{k,\ell}\} \in E_H ~\mid~ 1 \leq j \leq m, ~~ k>i, ~~ \ell<j\}.
\]
That is, $H_i$ consists of all the edges of $H$ whose 
upper-right endpoint is a vertex of the form $e_{i,j}$.   So, if 
the vertices are positioned as in Example \ref{positionex}, i.e., 
vertex $e_{i,j}$ is in position $(i,j)$ where $(A_G)_{i,j} =1$, then
then graph $H_i$ can be visualized as the graph where every
upper-right endpoint is on the $i$-th row.  

Since every edge will have an upper-right endpoint of the form
$e_{a,b}$ with $1 \leq a \leq n-1$, it follows that the $H_i$'s
partition $H$.  To finish the proof, it suffices to show that 
each $H_i$ is a co-chordal graph.  

Fix some $1 \leq i \leq n-1$, and consider $H_i$.  
Every
edge in $H_i$ has its upper-right endpoint among 
$\{e_{i,1},\ldots,e_{i,m}\}$.  (Note that not all of these vertices
may appear as upper-right endpoints.  For example, if $(A_G)_{i,j} = 0$, then the vertex $e_{i,j}$ does not even appear in $H$.)  
Every lower-left endpoint must be among the the
set 
\[\{e_{i+1,1},\ldots,e_{i+1,m},e_{i+2,1},\ldots,e_{i+2,m},
\ldots,e_{n,1},\ldots,e_{n,m}\}.\]

Let $V_1 = V_{H_i} \cap \{e_{i,1},\ldots,e_{i,m}\}$
and $V_2 = V_{H_i} \cap \{e_{i+1,1},\ldots,e_{i+1,m},\ldots,e_{n,1},\ldots,e_{n,m}\}.$
By our construction of $H_i$, $V_1$ and $V_2$ are independent
sets, that is, there are no edges with both endpoints in
both $V_1$, respectively, $V_2$.  
Consequently, in $H_i^c$ (the complement of $H_i$) we have
a clique on  the vertices of $V_1$ and a clique on the vertices of $V_2$.  

We now show that any cycle of length $\geq 4$ in $H_i^c$ must
have a chord.  For any cycle of length $\geq 5$, at least
three of the vertices must be among either $V_1$ or $V_2$.
But since the induced graphs on $V_1$ and $V_2$ in $H_i^c$ are cliques,
these three vertices are all mutually adjacent,
and thus the cycle has a chord.

So, now consider any cycle of length four.  It must have exactly
two vertices in $V_1$ and exactly two vertices in $V_2$.  If not,
it would have at least three vertices in $V_1$ or $V_2$, and as above,
these three vertices would be mutually adjacent.
Let us say that these four vertices are $e_{i,a}, e_{i,b}, e_{j,k},$
and $e_{r,s}$.  Without loss of generality, we can assume that
$a < b$.   

Suppose that we have an induced four cycle on $\{e_{i,a},e_{i,b},e_{j,k},e_{r,s}\}$. Because $e_{i,a}e_{i,b}
\in H_i^c$, the vertex $e_{i,a}$ adjacent to exactly one of 
$e_{j,k}$ and $e_{r,s}$ in $H_i^c$.  Say that $e_{i,a}e_{j,\ell}
\in H_i^c$.  Thus $e_{i,b}$ is adjacent to $e_{r,s}$ in $H_i^c$,
but  $e_{i,a}e_{r,s} \not\in H_i^c$.  But then
$e_{i,a}e_{r,s} \in H_i$, so $s < a$ since $e_{r,s}$ must be a 
lower-left endpoint.  But for $e_{i,a}e_{r,s}$ to be an
edge of $H_i$, and also $H$, the submatrix of $A_G$
given by rows $i$ and $r$ and columns $s$ and $a$ must be
$\begin{bmatrix}1 & 1 \\ 1 & 1 \end{bmatrix}$.  
Thus, in the matrix $A_G$ the submatrix given by rows
$i$ and $r$ and columns $s,a,$ and $b$ has the form
\[
\begin{blockarray}{cccc}
 & s & a & b \\
\begin{block}{c[ccc]}
  i & 1 & 1 & 1 \\
  j & 1 & 1 & \star\\
\end{block}
\end{blockarray}
 \]

By Lemma \ref{submatrixlemma}, the value of $\star$ 
must be $1$.  But this means $e_{i,b}e_{r,s}$ is also 
an edge of $H$, and consequently, the edge 
$e_{i,b}e_{r,s} \not\in H_i^c$.
But this contradicts the fact that $e_{i,b}e_{r,s} \in H_i^c$.
So $H_i^c$ has no induced four cycle.  Consequently, each $H_i$ is a
co-chordal subgraph.
\end{proof}

\begin{example}\label{subgraphex}
We return to our previous example.
Since $n=4$, if we use the notation of the above proof,
the subgraphs $H_1,H_2$, and $H_3$ of $H$ are: 
\vspace{.5cm}

\begin{center}
\begin{minipage}{0.33\textwidth}
\centering
\begin{tikzpicture}
          \vertex (e11) at (0,3)  [label=below:$e_{1,1}$] {};
          \vertex (e12) at (1,3)  [label=above:$e_{1,2}$] {};
          \vertex (e13) at (2,3)  [label=right:$e_{1,3}$] {};
          
          \vertex (e21) at (0,2)  [label=below:$e_{2,1}$] {};
          \vertex (e22) at (1,2)  [label=below:$e_{2,2}$] {};
          \vertex (e23) at (2,2)  [label=right:$e_{2,3}$] {};

          \vertex (e32) at (1,1)  [label=left:$e_{3,2}$] {};
          \vertex (e33) at (2,1)  [label=below:$e_{3,3}$] {};
          \vertex (e34) at (3,1)  [label=below:$e_{3,4}$] {};
          \vertex (e35) at (4,1)  [label=below:$e_{3,5}$] {};

          \vertex (e42) at (1,0)  [label=below:$e_{4,2}$] {};
          \vertex (e43) at (2,0)  [label=below:$e_{4,3}$] {};
          \vertex (e44) at (3,0)  [label=below:$e_{4,4}$] {};
          \vertex (e45) at (4,0)  [label=below:$e_{4,5}$] {};
	\path
		(e12) edge (e21)
                (e13) edge (e21)
                (e13) edge (e22)
                (e13) edge (e32)
                (e13) edge (e42)
        ;   
\end{tikzpicture}
\captionof{figure}{Graph $H_1$}
\end{minipage}\hfill
\begin{minipage}{0.33\textwidth}
\centering

\begin{tikzpicture}
          \vertex (e11) at (0,3)  [label=below:$e_{1,1}$] {};
          \vertex (e12) at (1,3)  [label=below:$e_{1,2}$] {};
          \vertex (e13) at (2,3)  [label=right:$e_{1,3}$] {};
          
          \vertex (e21) at (0,2)  [label=below:$e_{2,1}$] {};
          \vertex (e22) at (1,2)  [label=below:$e_{2,2}$] {};
          \vertex (e23) at (2,2)  [label=right:$e_{2,3}$] {};

          \vertex (e32) at (1,1)  [label=below:$e_{3,2}$] {};
          \vertex (e33) at (2,1)  [label=below:$e_{3,3}$] {};
          \vertex (e34) at (3,1)  [label=below:$e_{3,4}$] {};
          \vertex (e35) at (4,1)  [label=below:$e_{3,5}$] {};

          \vertex (e42) at (1,0)  [label=below:$e_{4,2}$] {};
          \vertex (e43) at (2,0)  [label=below:$e_{4,3}$] {};
          \vertex (e44) at (3,0)  [label=below:$e_{4,4}$] {};
          \vertex (e45) at (4,0)  [label=below:$e_{4,5}$] {};
	\path
                (e23) edge (e32)
                (e23) edge (e42)
        ;   
\end{tikzpicture}
\captionof{figure}{Graph $H_2$}
\end{minipage}
\begin{minipage}{0.33\textwidth}
\centering
\begin{tikzpicture}
          \vertex (e11) at (0,3)  [label=below:$e_{1,1}$] {};
          \vertex (e12) at (1,3)  [label=below:$e_{1,2}$] {};
          \vertex (e13) at (2,3)  [label=right:$e_{1,3}$] {};
          
          \vertex (e21) at (0,2)  [label=below:$e_{2,1}$] {};
          \vertex (e22) at (1,2)  [label=below:$e_{2,2}$] {};
          \vertex (e23) at (2,2)  [label=right:$e_{2,3}$] {};

          \vertex (e32) at (1,1)  [label=below:$e_{3,2}$] {};
          \vertex (e33) at (2,1)  [label=above:$e_{3,3}$] {};
          \vertex (e34) at (3,1)  [label=above:$e_{3,4}$] {};
          \vertex (e35) at (4,1)  [label=above:$e_{3,5}$] {};

          \vertex (e42) at (1,0)  [label=below:$e_{4,2}$] {};
          \vertex (e43) at (2,0)  [label=below:$e_{4,3}$] {};
          \vertex (e44) at (3,0)  [label=below:$e_{4,4}$] {};
          \vertex (e45) at (4,0)  [label=below:$e_{4,5}$] {};
	\path
                (e33) edge (e42)
                (e34) edge (e42)
                (e34) edge (e43)
                (e35) edge (e42)
                (e35) edge (e43)
                (e35) edge (e44)
        ;   
\end{tikzpicture}

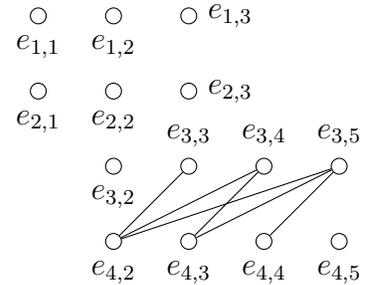
\captionof{figure}{Graph $H_3$}
\end{minipage}
\vspace{.25cm}
\end{center}

\noindent
Each of the graphs $H_1, H_2$, and $H_3$ are co-chordal,
that is, their complement is a chordal graph.  So
${\rm reg}(I_G) \leq 3+1 = 4$.  By using {\it Macaulay2}, we actually 
have ${\rm reg}(I_G) = 4$.
\end{example}

In some cases, the upper bound of this section agrees with the lower bound
of Corollary \ref{finalcor}, as in the case when $G = K_{n,m}$:
\begin{corollary}
Let $G = K_{n,m}$ be a complete bipartite graph with $n\leq m$.  Then
\[{\rm reg}(I_G) = n.\]
\end{corollary}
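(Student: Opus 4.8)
The plan is to obtain the equality by sandwiching ${\rm reg}(I_G)$ between the lower bound of Corollary~\ref{finalcor} and the upper bound of Theorem~\ref{secondtheorem}, which coincide at $n$. Throughout I would assume $2 \le n \le m$; the case $n = 1$ is degenerate, since then $G$ is a tree and $I_G = (0)$.

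For the lower bound, I would observe that $K_{n,m}$ contains $K_{n,n}$ as an \emph{induced} subgraph: keep all $n$ vertices of the smaller side together with any $n$ of the $m$ vertices of the larger side; the induced graph on these $2n$ vertices is precisely $K_{n,n}$, since every cross-pair is an edge. Applying Corollary~\ref{finalcor} with $t = 1$ and $H = K_{n,n}$ (legitimate because $n \ge 2$) gives ${\rm reg}(I_G) \ge n$.

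For the upper bound, I would first check that $K_{n,m}$ is chordal bipartite: it is bipartite by construction, and any cycle of length $\ge 6$ has a chord. Indeed, writing such a cycle as $x_{i_1} y_{j_1} x_{i_2} y_{j_2} \cdots x_{i_k} y_{j_k}$ with $k \ge 3$, the vertices $x_{i_1}$ and $y_{j_2}$ lie on opposite sides of the bipartition and are not consecutive on the cycle, so $\{x_{i_1}, y_{j_2}\}$ is an edge of $K_{n,m}$, hence a chord. (Equivalently, one may invoke Theorem~\ref{chordal bipartite}, since $I_{K_{n,m}}$ is well known to have a quadratic Gr\"obner basis.) Thus Theorem~\ref{secondtheorem} applies, and since $n, m \ge 2$ every vertex of $K_{n,m}$ has degree at least $2$, so in the notation of that theorem $r = s = 0$; we conclude ${\rm reg}(I_G) \le \min\{n, m\} = n$.

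Combining the two inequalities yields ${\rm reg}(I_G) = n$. There is no genuine obstacle: the only points needing a word of justification are that $K_{n,n}$ sits inside $K_{n,m}$ as an induced subgraph and that $K_{n,m}$ is chordal bipartite, both of which are immediate; all the real work has already been carried out in Corollary~\ref{finalcor} and Theorem~\ref{secondtheorem}.
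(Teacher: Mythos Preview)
Your proposal is correct and follows exactly the approach indicated in the paper: the paper states this corollary immediately after Theorem~\ref{secondtheorem} with the remark that the upper bound of that section agrees with the lower bound of Corollary~\ref{finalcor} for $G=K_{n,m}$, and you have supplied precisely the verifications (that $K_{n,n}$ is an induced subgraph of $K_{n,m}$, that $K_{n,m}$ is chordal bipartite, and that $r=s=0$) needed to make that sentence into a proof.
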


\begin{remark}
The above corollary is also a special case of result of 
\cite[Proposition 5.7]{CN} since 
the graph $K_{n,m}$ is an example of a Ferrers graph.
However, chordal bipartite graphs are 
not necessarily Ferrers graphs;  the example of Example \ref{positionex} is not
a Ferrers graph.
\end{remark}


\section{Betti numbers of the toric ideals $I_{K_{2,d}}$}

We give a new proof for the graded Betti numbers of the toric
ideals $I_{K_{2,d}}$.  The matrix $A_G$
used to construct the initial ideal of $I_{K_{2,d}}$ as in Construction
\ref{gb} is a $2 \times d$ matrix consisting of ones.  Consequently,
the graph $H$ constructed from the initial ideal of $I_{K_{2,d}}$ 
will be a bipartite graph with bipartition $V_H = \{e_2,\ldots,e_d\}
\cup \{f_1,\ldots,f_{d-1}\}$ with edge set $E_H = 
\{e_if_j ~|~ 1 \leq j < i \leq d\}$.  (For simplicity here, we have changed our
notation for our vertices using $\{e_2,\ldots,e_d,f_1,\ldots,f_{d-1}\}$
instead of $\{e_{1,2},\ldots,e_{1,d},e_{2,1},\ldots,e_{2,d-1}\}$).
By the next result, finding the $\beta_{i,j}(I_{K_{2,d}})$'s is equivalent to
finding the Betti numbers of $I(H)$:

\begin{theorem}\label{comparebetti}
Fix some integer $d \geq 2$, and consider
the toric ideal $I_{K_{2,d}}$.
Then $I_{K_{2,d}}$ has a linear resolution, and furthermore,
\[\beta_{i,i+2}(I_{K_{2,d}}) = \beta_{i,i+2}(I(H)) ~~\mbox{for all $i \geq 0$}\]
where $H$ is the graph with edge ideal $I(H) = {\rm in}_<(I_{K_{2,d}})$
with $<$ as in Construction \ref{gb}.
\end{theorem}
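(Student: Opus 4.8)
The plan is to reduce the whole statement to a fact about the edge ideal $I(H)$ and then quote Theorem \ref{initialidealbound}. Recall from the paragraph preceding the statement that for $G = K_{2,d}$ the matrix $A_G$ is the $2\times d$ all-ones matrix, so every $2\times 2$ submatrix consists of ones; hence, by Construction \ref{graphconstruction}, $H$ is exactly the bipartite graph on $\{e_2,\dots,e_d\}\cup\{f_1,\dots,f_{d-1}\}$ with edge set $\{\,e_if_j \mid 1\le j<i\le d\,\}$, and $I(H)=\operatorname{in}_<(I_{K_{2,d}})$ is generated in degree $2$. Once we know that $I(H)$ has a linear resolution, the equality case of Theorem \ref{initialidealbound} gives $\beta_{i,j}(I_{K_{2,d}})=\beta_{i,j}(I(H))$ for all $i,j\ge 0$; in particular $\beta_{i,j}(I_{K_{2,d}})=\beta_{i,j}(I(H))=0$ whenever $j\ne i+2$, so $I_{K_{2,d}}$ is itself $2$-linear and the asserted equality for $j=i+2$ follows. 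Thus everything comes down to: $I(H)$ has a $2$-linear resolution.

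To prove that, I would show that the complement $H^c$ is chordal and invoke Fr\"oberg's theorem (the edge ideal of a graph has a linear resolution iff the complement of the graph is chordal). Concretely, $H^c$ is the graph on the same vertex set in which $\{e_2,\dots,e_d\}$ and $\{f_1,\dots,f_{d-1}\}$ are cliques and $e_if_j$ is an edge precisely when $j\ge i$. I claim that $e_d,e_{d-1},\dots,e_2,f_{d-1},f_{d-2},\dots,f_1$ is a perfect elimination ordering for $H^c$. Indeed, at the stage where $e_i$ is to be removed (with $e_d,\dots,e_{i+1}$ already removed), the neighbourhood of $e_i$ among the remaining vertices is $\{e_2,\dots,e_{i-1}\}\cup\{f_i,f_{i+1},\dots,f_{d-1}\}$; the two pieces are cliques, and for $i'\le i-1<i\le j$ the pair $e_{i'}f_j$ is an edge of $H^c$, so this neighbourhood is a clique. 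Once all the $e_i$ have been removed we are left with the clique $\{f_1,\dots,f_{d-1}\}$, which trivially admits a perfect elimination ordering. Hence $H^c$ is chordal and $I(H)$ has a $2$-linear resolution. (Alternatively, one can order the generators $e_\ell f_k$ of $I(H)$ lexicographically and verify directly that $I(H)$ has linear quotients, which also yields a linear resolution; either route works.)

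Combining these steps completes the proof: $\operatorname{in}_<(I_{K_{2,d}})=I(H)$ has a linear resolution, so by Theorem \ref{initialidealbound} the graded Betti numbers of $I_{K_{2,d}}$ and $I(H)$ agree in every homological and internal degree, whence $I_{K_{2,d}}$ is $2$-linear and $\beta_{i,i+2}(I_{K_{2,d}})=\beta_{i,i+2}(I(H))$ for all $i\ge 0$. The only step carrying real content is the chordality of $H^c$ (equivalently, the linear-quotients check); the rest is a direct appeal to results already in hand. I do not expect a serious obstacle here once the perfect elimination ordering above is written out — the one point requiring care is reading off $H$, and hence $H^c$, correctly from Construction \ref{graphconstruction}.
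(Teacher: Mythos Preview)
Your proof is correct and follows essentially the same logical route as the paper: show that $H^c$ is chordal, deduce that $I(H)$ has a $2$-linear resolution, and then invoke the equality case of Theorem~\ref{initialidealbound}. The only cosmetic difference is that the paper appeals to Woodroofe's bound (Theorem~\ref{co-chordal}) with $\operatorname{cochord}(H)=1$ rather than Fr\"oberg's theorem, and it takes the co-chordality of $H$ as already established in the proof of Theorem~\ref{secondtheorem} (where, for $K_{2,d}$, the single subgraph $H_1$ coincides with $H$); your explicit perfect elimination ordering is a fine self-contained substitute.
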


\begin{proof}
The graph $H$ is co-chordal, so by Theorem \ref{co-chordal}, the
quadratic monomial ideal $I(H)$ has $2 \leq {\rm reg}(I(H)) \leq 2$.
This implies that $I(H)$ has a linear resolution, 
i.e., $\beta_{i,j}(I(H)) = 0$ for $j \neq i+2$.  
Now apply Theorem \ref{initialidealbound}.
\end{proof}

To compute the graded Betti numbers of the edge ideal $I(H)$,
we need the next lemma.

\begin{lemma}[{\cite[Prop. 1.2]{RVT}}]\label{linearstrand}
Let $G$ be a finite simple graph with edge ideal $I(G)$.
Then
\[\beta_{i,i+2}(I(G)) = \sum_{S \subseteq V, |S|= i+2} (\#{\rm comp}(G_S^c)-1)
~~\mbox{for all $i \geq 0$.}\]
Here, $\#{\rm comp}(-)$ is the number of connected components, and $G^c$
denotes the complement.
\end{lemma}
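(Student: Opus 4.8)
The plan is to deduce the formula directly from Hochster's formula (Theorem \ref{simplicialbetti}). First I would recall that the edge ideal $I(G)$ is precisely the Stanley--Reisner ideal $I(\Delta)$ of the \emph{independence complex} $\Delta = {\rm Ind}(G)$, whose faces are the independent sets of $G$: the minimal non-faces of ${\rm Ind}(G)$ are exactly the edges of $G$, which are in turn the minimal generators of $I(G)$. Applying Theorem \ref{simplicialbetti} to $I(\Delta)$ with $j = i+2$, so that $j-i-2 = 0$, gives
\[
\beta_{i,i+2}(I(G)) = \sum_{W \subseteq V,\ |W| = i+2} \dim_k \widetilde{H}_0(\Delta_W; k).
\]

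Next I would identify the restricted complex $\Delta_W$. A subset of $W$ is a face of $\Delta_W$ if and only if it is a face of $\Delta$ contained in $W$, i.e.\ an independent set of $G$ contained in $W$, i.e.\ an independent set of the induced subgraph $G_W$; hence $\Delta_W = {\rm Ind}(G_W)$. The key combinatorial observation is then that the $1$-skeleton of ${\rm Ind}(G_W)$ is the graph on vertex set $W$ whose edges are the two-element independent sets of $G_W$, that is, the non-edges of $G_W$, that is, the edges of the complement $G_W^c$. Since the connected components of any simplicial complex are exactly those of its $1$-skeleton, the number of connected components of ${\rm Ind}(G_W)$ equals $\#{\rm comp}(G_W^c)$.

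Finally, since $|W| = i+2 \geq 2$, the complex ${\rm Ind}(G_W)$ is nonempty (it contains every singleton of $W$), so $\dim_k \widetilde{H}_0({\rm Ind}(G_W); k) = \#{\rm comp}({\rm Ind}(G_W)) - 1 = \#{\rm comp}(G_W^c) - 1$. Substituting this into the displayed sum yields the claimed equality. The argument is essentially routine; the only points that require a little care are the bookkeeping in Hochster's formula (checking that $j-i-2=0$ so that exactly $\widetilde{H}_0$ appears) and the fact that $\widetilde{H}_0$ computes the number of components minus one precisely because the relevant complex is nonempty — which is guaranteed by $i+2 \geq 2$. I do not expect any genuine obstacle beyond this translation between the topology of ${\rm Ind}(G_W)$ and the combinatorics of $G_W^c$.
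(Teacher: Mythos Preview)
Your argument is correct and is precisely the standard derivation via Hochster's formula: identify $I(G)$ as the Stanley--Reisner ideal of the independence complex, restrict to $W$, and read off $\widetilde{H}_0$ as $\#{\rm comp}(G_W^c)-1$ from the $1$-skeleton. The paper itself does not prove this lemma but simply cites \cite[Prop.~1.2]{RVT}; your proof is essentially the one found there, so there is nothing further to compare.
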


Putting together the above pieces, we arrive at the following formula.

\begin{theorem}
For all $d \geq 2$, the graded Betti numbers of the toric ideal
$I_{K_{2,d}}$ are
$\beta_{i,j}(I_{K_{2,d}}) = 0$ if $j\neq i+2$, and 
\[\beta_{i,i+2}(I_{K_{2,d}}) = 
\sum_{\ell=1}^{i+1}\sum_{r=0}^{d-2-\ell} \binom{\ell-1+r}{\ell-1}\binom{d-\ell-r}{i+2-\ell}
~~\mbox{for all $i \geq 0$.}\]
\end{theorem}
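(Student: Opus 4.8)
The plan is to push the whole computation through Theorem~\ref{comparebetti}. That theorem already gives the vanishing half of the statement: $I_{K_{2,d}}$ has a linear resolution, so $\beta_{i,j}(I_{K_{2,d}}) = 0$ for $j \neq i+2$; and it reduces the remaining task to computing $\beta_{i,i+2}(I(H))$ for the bipartite graph $H$ on $\{e_2,\dots,e_d\}\cup\{f_1,\dots,f_{d-1}\}$ with $E_H = \{e_if_j \mid 1\le j<i\le d\}$. To get at $\beta_{i,i+2}(I(H))$ I would invoke Lemma~\ref{linearstrand}, which expresses it as $\sum_{S\subseteq V_H,\,|S|=i+2}\bigl(\#{\rm comp}(H_S^c)-1\bigr)$. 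So everything comes down to understanding components of complements of induced subgraphs of $H$.

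The conceptual step is a clean description of those complements. Write an arbitrary $S\subseteq V_H$ as $S=F'\sqcup E'$ with $F'\subseteq\{f_1,\dots,f_{d-1}\}$ and $E'\subseteq\{e_2,\dots,e_d\}$. Since $H$ is bipartite, in $H_S^c$ the vertices of $E'$ span a clique and those of $F'$ span a clique, and the only edges of $H_S^c$ joining the two cliques are the pairs $e_if_j$ with $j\ge i$. Hence $H_S^c$ is connected unless $F'$ and $E'$ are both nonempty and $\max\{j: f_j\in F'\}<\min\{i: e_i\in E'\}$, in which case it has exactly two components. Thus $\#{\rm comp}(H_S^c)-1$ equals $1$ on exactly these $S$ and $0$ on all others, and
\[
\beta_{i,i+2}(I(H)) = \#\bigl\{(F',E') : \emptyset\neq F'\subseteq\{1,\dots,d-1\},\ \emptyset\neq E'\subseteq\{2,\dots,d\},\ |F'|+|E'|=i+2,\ \max F'<\min E'\bigr\}.
\]

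Finally I would evaluate this count. Stratify by $\ell:=|F'|$, which must run over $1,\dots,i+1$ (so that $|E'|=i+2-\ell\ge 1$), and by $p:=\max F'$: for fixed $\ell$ and $p$ there are $\binom{p-1}{\ell-1}$ choices for the remaining elements of $F'$ inside $\{1,\dots,p-1\}$ and $\binom{d-p}{i+2-\ell}$ choices for $E'$ inside $\{p+1,\dots,d\}$, with $p$ ranging over $\ell\le p\le d-1$. This yields $\beta_{i,i+2}(I(H))=\sum_{\ell=1}^{i+1}\sum_{p=\ell}^{d-1}\binom{p-1}{\ell-1}\binom{d-p}{i+2-\ell}$, and the substitution $p=\ell+r$ puts it into the double-sum form $\sum_{\ell}\sum_{r}\binom{\ell-1+r}{\ell-1}\binom{d-\ell-r}{i+2-\ell}$ of the statement. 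The only place calling for care is the bookkeeping in this last step: matching the summation ranges of $\ell$ and $p$ (equivalently $r$) against the three constraints on $(F',E')$, and checking that the boundary cases where $F'$ or $E'$ is empty contribute nothing. The structural input — the two-clique picture of $H_S^c$ — is immediate from bipartiteness, so there is no serious obstacle beyond the counting; as a sanity check one can confirm the resulting number agrees with the value $(i+1)\binom{d}{i+2}$ coming from the Eagon--Northcott resolution that this argument is meant to bypass.
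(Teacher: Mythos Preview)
Your approach is exactly the paper's: reduce via Theorem~\ref{comparebetti} to $\beta_{i,i+2}(I(H))$, apply Lemma~\ref{linearstrand}, observe that $H_S^c$ consists of two cliques joined by the non-edges of $H$, and count the $S$ for which the cliques are disjoint by stratifying on $\ell=|F'|$ and on $\max F'$. The structural analysis and the stratification are identical to the paper's argument.

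There is one point you gloss over, and it is precisely the ``bookkeeping'' you flag as needing care. After substituting $p=\ell+r$, your range $\ell\le p\le d-1$ gives $0\le r\le d-1-\ell$, whereas the formula in the statement has $0\le r\le d-2-\ell$. These do not agree: the extra term at $r=d-1-\ell$ is $\binom{d-2}{\ell-1}\binom{1}{i+2-\ell}$, which is nonzero when $\ell=i+1$ and contributes $\binom{d-2}{i}$. Your range is in fact the correct one---for instance, with $d=2$ and $i=0$ your sum gives $1$ (as it must, since $I_{K_{2,2}}$ is principal), while the stated sum is empty; and your version recovers the Eagon--Northcott value $(i+1)\binom{d}{i+2}$ that you mention as a sanity check. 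So rather than asserting that the substitution ``puts it into the double-sum form of the statement,'' you should record that your upper limit is $d-1-\ell$ and note the off-by-one in the displayed formula.
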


\begin{proof}
Fix some $d \geq 2$.
By Theorem \ref{comparebetti}, it suffices to compute $\beta_{i,i+2}(I(H))$ for all
$i \geq 0$.    To compute these numbers, we can use Lemma
\ref{linearstrand}.  

Fix an $i \geq 0$.  For each $S \subseteq
V$ with $|S|=i+2$, we wish to compute $\#{\rm comp}(H_S^c)$.  Since
$H$ has  bipartition $V  = E \cup F$,  
if $S \subseteq E$ or if $S \subseteq F$, then $H_S^c$ is a complete
graph, and hence connected. Thus $S$ does not contribute to 
$\beta_{i,i+2}(I(H))$
because $\#{\rm comp}(H_S^c) -1 =0$.

So, we can assume that $S \subseteq V$, $|S| = i+2$,
$S \cap E \neq \emptyset$,
and $S \cap F \neq \emptyset$. Because all the $e_i$ vertices will be
adjacent in $H_S^c$, and similarily, all the $f_j$ vertices will be adjacent
in $H_S^c$, we see that $H_S^c$ is connected 
if and only if there exits an $e_i,f_j \in S$ with $i \leq j$.   As we noted
above, if $H_S^c$ is 
connected, it does not contribute to $\beta_{i,i+2}(I(H))$.
To summarize,  we need to count the number of subsets of 
$S \subseteq V$ that have the property that 
$|S| = i+2$, $S \cap E \neq \emptyset$,
$S \cap F \neq \emptyset$, and if $f_j \in S$, there is no
$e_i \in S$ with $i \leq j$. Note if $S$ satisfies these conditions,
then $\#{\rm comp}(H_S^c)=2$,
so each such $S$ contributes one to $\beta_{i,i+2}(I(H))$.

Because the set $S$ must contain at least one vertex of $E$,
we can take at most $i+1$ vertices from among the $f_j$'s.  
Fix an integer $1 \leq \ell \leq i+1$.  Then 
for each $r=0,\ldots,d-2-\ell$,
there are $\binom{\ell-1+r}{\ell-1}$ subsets $T$ of $\{f_1,\ldots,f_{d-1}\}$
with $|T| = \ell$, with $f_{\ell+r} \in T$ and for all $f_a \in T$, $a \leq \ell+r.$
To see this, pick a subset of size $\ell-1$ from $\{f_1,\ldots,f_{\ell+r-1}\}$
and then add $f_{\ell+r}$.

Now consider any of the $\binom{\ell-1+r}{\ell-1}$ subsets of size $\ell$
where  $\ell+r$ is the largest index of a vertex among the $f_j$'s.
In order to make a subset $S$ of size $i+2$ from this set
that has the property that $H_s^c$ is not connected,  we then
need to pick $i+2-\ell$ of the remaining vertices among $\{e_{\ell+r-1},\ldots,e_d\}$.
So, we have to pick $i+2-\ell$ things
among $d-r-\ell$ things.   This gives us then $\binom{\ell-1+r}{\ell-1}\binom{
d-r-\ell}{i+2-\ell}$ possible subsets.  Now summing over all $\ell$ and $r$
gives the desired formula.
\end{proof}

\begin{remark}
It is known (e.g., \cite[Proposition 9.1.2]{VBook}) that when
$G = K_{a,b}$, then $I_G$ is generated by the $2 \times 2$ minors 
of a generic $a \times b$ matrix.  One
could deduce the graded Betti numbers of $I_G$ using the Eagon-Northcott resolution (\cite{EN}) for $2\times 2$ minors of generic matrices.  Our proof
gives a combinatorial argument for the graded Betti numbers.
\end{remark}


\end{document}